\documentclass{amsart}

\usepackage{mathtools}
\usepackage{enumitem}
\usepackage{amssymb}

\newtheorem{theorem}{Theorem}[section]
\newtheorem{lemma}[theorem]{Lemma}
\newtheorem{proposition}[theorem]{Proposition}

\theoremstyle{definition}
\newtheorem{definition}[theorem]{Definition}

\theoremstyle{remark}
\newtheorem{remark}[theorem]{Remark}

\DeclareMathOperator{\ad}{ad}
\DeclareMathOperator{\Cl}{Cl}
\DeclareMathOperator{\dom}{dom}
\DeclareMathOperator{\OP}{OP}
\DeclareMathOperator{\sgn}{sgn}
\DeclareMathOperator{\Tr}{Tr}
\DeclareMathOperator{\Ree}{Re}
\DeclareMathOperator{\Imm}{Im}

\numberwithin{equation}{section}

\begin{document}

\title{Pseudo-Riemannian Spectral Triples for $\mathrm{SU}(1,1)$}


\author{Jort J.A.M. de Groot}
\address{Korteweg–de Vries Institute voor Wiskunde, University of Amsterdam, Sciencepark 105-107, 1090
GE Amsterdam, The Netherlands}
\email{j.j.a.m.groot@uva.nl}

\subjclass[2020]{Primary: 58B34, Secondary: 22E30, 53C50}

\keywords{pseudo-Riemannian manifold, noncommutative geometry, Plancherel theorem}

\date{2026}

\dedicatory{}

\begin{abstract}
    We use the harmonic analysis of $\mathrm{SU}(1,1)$ to show that the triple $(\mathcal{A},\mathcal{H},D)$, with $D$ (the closure of) Kostant's cubic Dirac operator acting on the Hilbert space $\mathcal{H}=L^2(\mathrm{SU}(1,1))\otimes\mathbb{C}^2$, and with $*$-algebra $\mathcal{A}=C^\infty_c(\mathrm{SU}(1,1))\otimes 1$, forms both a pseudo-Riemannian spectral triple in the sense of Van den Dungen, Paschke and Rennie, and an indefinite spectral triple in the sense of Van den Dungen and Rennie.
\end{abstract}

\maketitle

\section{Introduction}
One of the biggest accomplishments of the noncommutative geometry programme is the reconstruction theorem by Alain Connes \cite{connes_2013}, which states that to a spectral triple $(\mathcal{A},\mathcal{H},D)$ with $\mathcal{A}$ commutative, satisfying five further conditions, is uniquely associated a smooth oriented compact (spin$^c$) manifold $X$ with $\mathcal{A}=C^\infty(X)$ and with Riemannian metric constructed from $D$. As of yet, however, it remains unknown how to prove a similar theorem for manifolds that are not necessarily compact and that are equipped with a pseudo-Riemannian metric. Hence, it also remains an open question what the correct formulation of a spectral triple should be for those manifolds. 

There have been multiple attempts to formulate spectral triples in different settings. An incomplete list of examples includes: pseudo-Riemannian spectral triples (à la Strohmaier) \cite{strohmaier_2006}, equivariant Lorentzian spectral triples \cite{paschke_sitarz_2006}, semifinite spectral triples \cite{carey_gayral_rennie_sukochev_2014}, Krein spectral triples \cite{dungen_2016}, pseudo-Riemannian spectral triples (à la Van den Dungen, Paschke, Rennie) \cite{dungen_paschke_rennie_2013}, and indefinite spectral triples \cite{dungen_rennie_2016, dungen_2019}. The latter pseudo-Riemannian spectral triple is a generalization of semifinite and equivariant Lorentzian spectral triples, and is compatible with the framework developed for semifinite spectral triples (especially the local index theorem) \cite{dungen_paschke_rennie_2013}. On the other hand, Krein spectral triples form a complementary viewpoint. Indefinite spectral triples are on a similar footing to the later pseudo-Riemannian spectral triples. In fact, the two definitions are closely related and might even be equivalent, although a proof of that is still missing.

Typically, the Dirac operator on a pseudo-Riemannian manifold is not symmetric. Hence, in the assumptions for spectral triples that describe pseudo-Riemannian manifolds, it is no longer required that the Dirac operator is essentially self-adjoint. Instead, the Dirac operator $D$ is supposed to be closed and densely defined. The definition of a pseudo-Riemannian spectral triple \cite{dungen_paschke_rennie_2013} contains assumptions on second-order operators, that ensure that there is an associated spectral triple, obtained via Wick rotation. With this, one obtains a $K$-homology class so that the local index formula can be applied. For the Wick rotation, however, there are two possibilities. On the other hand, the definition of indefinite spectral triples \cite{dungen_rennie_2016,dungen_2019} contains assumptions on first-order operators, and associates \emph{two} spectral triples to the indefinite spectral triple. Moreover, this process can be reversed, and out of the two spectral triples the indefinite spectral triple can be reconstructed.

The motivating example for both pseudo-Riemannian spectral triples and indefinite spectral triples is given by pseudo-Riemannian spin manifolds $(M,g)$ for which the tangent bundle splits as $TM=E_t\oplus E_s$, with $g$ negative definite on $E_t$ and positive definite on $E_s$. This allows to define a reflection $r\colon TM\rightarrow TM$ by $r\coloneq 1-2T$, with $T$ the projection $TM\rightarrow E_t$, and a Riemannian metric $g_E(v,w)\coloneq g(rv,w)$ (where $v,w\in TM$). If the resulting Riemannian manifold is complete and of bounded geometry, and moreover its Dirac bundle has bounded geometry, then the geometric Dirac operator $D$ arising from the metric $g$ determines a pseudo-Riemannian spectral triple \cite[Proposition 3.8]{dungen_paschke_rennie_2013}, and an indefinite spectral triple \cite[Proposition 4.5]{dungen_2019}. Moreover, in both cases, the assumptions on the spectral triples ensure that the combinations of the real and imaginary part of the Dirac operator, $D_\pm=\Ree D\pm \Imm D$, determine a unital spectral triple in the sense of \cite{carey_gayral_rennie_sukochev_2014}. Here $\Ree D=\tfrac{1}{2}(D+D^*)$ and $\Imm D=-\tfrac{i}{2}(D-D^*)$, where $D^*$ is the Hilbert space adjoint of $D$.

The aim of this paper is to construct both a pseudo-Riemannian spectral triple and an indefinite spectral triple associated to the Lie group $\mathrm{SU}(1,1)$. Although the manifold underlying $\mathrm{SU}(1,1)$ can be shown to satisfy the above requirements for the constructions in \cite{dungen_paschke_rennie_2013} and \cite{dungen_rennie_2016,dungen_2019}, the present paper makes the following points:
\begin{enumerate}
    \item[1)] The harmonic analysis related to the group structure of $\mathrm{SU}(1,1)$ can be used to approach the axioms for the spectral triples. This is not taken into account in the geometric construction, which uses only the manifold structure. Moreover, a proof relying on the harmonic analysis of $\mathrm{SU}(1,1)$ can more easily be adapted to the quantum group setting\footnote{This is a direction that is pursued in upcoming work, for the locally compact quantum group $\mathrm{SU}_q(1,1)\rtimes\mathbb{Z}_2$.}.
    \item[2)] The Wick rotation defined using the reflection $r$ has a natural interpretation in terms of the Cartan involution on $\mathrm{SU}(1,1)$. To be precise, the pseudo-Riemannian metric obtained from the Killing form $B$ is related to a Riemannian metric via the standard Cartan involution $\theta$ of $\mathfrak{su}(1,1)$, where the Riemannian metric is defined using the bilinear form $B_\theta(X,Y)\coloneq B(X,\theta Y)$ for $X,Y\in\mathfrak{su}(1,1)$.
    \item[3)] Instead of the geometric Dirac operator, we use another Dirac operator for $\mathrm{SU}(1,1)$ which naturally arises, namely Kostant's cubic Dirac operator \cite{kostant_1999}, which squares to the Casimir operator modulo a constant. In fact, as will become clear, for the construction of a pseudo-Riemannian spectral triple, the cubic Dirac operator has the advantage that the second-order operator $R_D=\tfrac{i}{2}(D^2-D^{*2})$ vanishes, which greatly simplifies the smoothness assumptions for the spectral triple. Moreover, for the construction of an indefinite spectral triple, working with the cubic Dirac operator has the advantage that the real and imaginary part of $D$ anti-commute, which is stronger than the required \emph{weakly} anti-commutation in the assumptions for the spectral triple.
    \item[4)] We argue that for an indefinite spectral triple, the correct assumption is for the real part and imaginary part of the Dirac operator to weakly \emph{anti-commute}, in both the even and odd case. This is a modification of the definition of odd indefinite spectral triples in \cite{dungen_rennie_2016}, where it is stated that in the odd case the real and imaginary part should weakly \emph{commute}. We support the claim that weakly anti-commuting is the correct notion by working out the example of $\mathrm{SU}(1,1)$, and we recover the doubling construction of \cite{dungen_rennie_2016} to obtain an even indefinite spectral triple from an odd spectral triple under the assumption of weakly anti-commuting real and imaginary part.
\end{enumerate}

Moreover, a Dirac operator on homogeneous spaces $G/K$ can be defined as simply the difference of the cubic Dirac operator on $G$ and on $K$. The hope is that knowledge of the spectral triple on $\mathrm{SU}(1,1)$ in this way can be used to obtain a spectral triple on the Poincaré disc $\mathrm{SU}(1,1)/\mathrm{U}(1)$. This direction will, however, not be pursued in this work.

The paper is structured as follows. In section \ref{sec_cubic_dirac} we fix notation for $\mathrm{SU}(1,1)$, and compute the cubic Dirac operator. Moreover, we discuss how this operator fits in the geometric formulation. Then, in section \ref{sec_harmonic_analysis} we explain the harmonic analysis of $\mathrm{SU}(1,1)$, which is the most important tool for the proofs in the later sections. In section \ref{sec_spectral_triples} we give the definition of a pseudo-Riemannian spectral triple and an indefinite spectral triple, and formulate the main theorems, which state that $\mathrm{SU}(1,1)$ together with the cubic Dirac operator forms both these spectral triples. All of section \ref{sec_proof} is devoted to proving these theorems.

\textbf{Acknowledgments.} I am grateful to Hessel Posthuma and Mikhail Isachenkov for their supervision and insightful feedback during this project. I also thank Koen van den Dungen and Adam Rennie for helpful correspondence concerning the two spectral triples treated in this paper. Finally, I am grateful to Teun van Nuland for valuable discussions regarding the proof of Lemma \ref{compact_resolvent}.

\section{The cubic Dirac operator on $\mathrm{SU}(1,1)$}\label{sec_cubic_dirac}
The Lie group $\mathrm{SU}(1,1)$ is the group of complex $2\times 2$-matrices of unit determinant that preserve the Hermitian form with signature $J=\begin{pmatrix} 1 & 0 \\ 0 & -1 \end{pmatrix}$, that is:
\begin{equation*}
    \mathrm{SU}(1,1)=\{ A\in M_2(\mathbb{C})\mid\det{A}=1, AJA^*=J\}.
\end{equation*}

The Lie algebra of $\mathrm{SU}(1,1)$ is the real Lie algebra $\mathfrak{su}(1,1)$ spanned by $\{e_1,e_2,e_3\}$, with Lie brackets $[e_1,e_2]=e_3$, $[e_2,e_3]=-e_1$, $[e_3,e_1]=-e_2$. The basis is orthogonal with respect to the Killing form $B(X,Y)\coloneq\Tr\ad X\ad Y$ on $\mathfrak{su}(1,1)$, which is symmetric, bilinear and non-degenerate because $\mathfrak{su}(1,1)$ is semisimple. One computes
\begin{equation*}
    B(e_1,e_1)=2,\qquad B(e_2,e_2)=2,\qquad B(e_3,e_3)=-2.
\end{equation*}
so that the $B$-dual basis is $\{e^1,e^2,e^3\}\coloneq\{\tfrac{1}{2}e_1,\tfrac{1}{2}e_2,-\tfrac{1}{2}e_3\}$. The universal enveloping algebra $\mathcal{U}(\mathfrak{su}(1,1))$ has center generated by the Casimir element $\Omega$ given by
\begin{equation*}
    \Omega=e_1e^1+e_2e^2+e_3e^3=\tfrac{1}{2}e_1^2+\tfrac{1}{2}e_2^2-\tfrac{1}{2}e_3^2.
\end{equation*}

The Clifford algebra $\Cl(\mathfrak{su}(1,1))$ is the associative algebra generated by the elements of $\mathfrak{su}(1,1)$, with relations
\begin{equation*}
    XY+YX=2B(X,Y),\qquad X,Y\in\mathfrak{su}(1,1).
\end{equation*}
These relations correspond to the convention used in \cite{meinrenken_2013}. The Clifford algebra is isomorphic as vector spaces to the exterior algebra $\wedge\mathfrak{su}(1,1)$. This isomorphism is explicitly realized by the quantization map $q\colon\wedge\mathfrak{su}(1,1)\rightarrow \Cl(\mathfrak{su}(1,1))$ (see e.g. \cite[prop. 2.7]{meinrenken_2013}), which for all $X_1,...,X_k\in\mathfrak{su}(1,1)$ is given by
\begin{equation*}
    q(X_1\wedge\cdots\wedge X_k)=\frac{1}{k!}\sum_{\sigma\in S_k}\sgn(\sigma)X_{\sigma_1}\cdots X_{\sigma_k},
\end{equation*}
where $S_k$ is the group of permutations of $1,...,k$ and $\sgn(\sigma)$ is the parity of a permutation $\sigma$.

Invariance of the Killing form implies that it is totally anti-symmetric, thus defining an element of $\wedge^3\mathfrak{su}(1,1)$ known as the structure constant tensor, which in terms of the (dual) basis is given by
\begin{equation*}
    \phi\coloneq-\frac{1}{12}\sum_{abc}B([e_a,e_b],e_c)e^a\wedge e^b\wedge e^c.
\end{equation*}
An explicit computation gives 
\begin{equation}
    \phi=-\tfrac{1}{8}e_1\wedge e_2\wedge e_3.
\end{equation}

Kostant and Sternberg \cite{kostant_sternberg_1987} were the first to observe that the image of the structure constant tensor $q(\phi)=-\tfrac{1}{8}e_1e_2e_3$ under the quantization map squares to a constant. This was used by Kostant to define a Dirac operator \cite{kostant_1999}, which he called the cubic Dirac operator, as an element of $\mathcal{U}(\mathfrak{su}(1,1))\otimes\Cl(\mathfrak{su}(1,1))$ of the following form
\begin{equation}\label{eq_cubic_dirac}
    D_0\coloneq\sum_{a}e^a\otimes e_a+1\otimes q(\phi).
\end{equation}
The cubic Dirac operator squares to
\begin{equation*}
    D_0^2=\Omega\otimes 1+\tfrac{1}{24}\Tr\Omega\otimes 1,
\end{equation*}
with the trace computed in the adjoint representation.

\begin{remark}
    In fact, the cubic Dirac operator defined above is a special case of the Dirac operator defined by Kostant in \cite{kostant_1999}. The more general setting considers a quadratic Lie algebra $\mathfrak{g}$ with $\mathfrak{k}\subset\mathfrak{g}$ a quadratic subalgebra. Let $\mathfrak{p}=\mathfrak{k}^\perp$ be the orthogonal complement of $\mathfrak{k}$ in $\mathfrak{g}$, so that $\mathfrak{g}=\mathfrak{k}\oplus\mathfrak{p}$. The relative Dirac operator is the element $D_{\mathfrak{g},\mathfrak{k}}\in (\mathcal{U}(\mathfrak{g})\otimes\Cl(\mathfrak{p}))^{\mathfrak{k}\text{-inv}}$ given by \cite[Proposition 7.7]{meinrenken_2013}
    \begin{equation*}
        D_{\mathfrak{g},\mathfrak{k}}=\sum_a{}^{(\mathfrak{p})} e_a\otimes e^a+q(\phi_\mathfrak{p}).
    \end{equation*}
    Here the sum $\sum_{a,b}{}^{(\mathfrak{p})}$ indicates summation over the basis of $\mathfrak{p}$, and $\phi_\mathfrak{p}$ is the structure constant tensor of $\mathfrak{p}$ (i.e., only summing over the basis of $\mathfrak{p}$).

    The relative Dirac operator is alternatively obtained as a difference of the cubic Dirac operator for $\mathfrak{g}$ and for $\mathfrak{k}$. For this, let $\lambda_\mathfrak{g}(\xi)\coloneq-\tfrac{1}{4}\sum_{a,b}B(\xi,[e_a,e_b])e^a\wedge e^b$. For $\xi\in\mathfrak{k}$, this decomposes as $\lambda_\mathfrak{g}(\xi)=\lambda_\mathfrak{k}(\xi)+\lambda_\mathfrak{p}(\xi)$ for some $\lambda_\mathfrak{p}(\xi)\in\wedge^2\mathfrak{p}$, by virtue of $[\mathfrak{k},\mathfrak{p}]\subset\mathfrak{p}$. Define an injective algebra homomorphism $j\colon \mathcal{U}(\mathfrak{k})\otimes\Cl(\mathfrak{k})\rightarrow\mathcal{U}(\mathfrak{g})\otimes\Cl(\mathfrak{g})$ by sending generators $K\otimes 1, 1\otimes K\in \mathcal{U}(\mathfrak{k})\otimes\Cl(\mathfrak{k})$ to the elements $K\otimes 1+1\otimes q(\lambda_\mathfrak{p}(K))$ and $1\otimes K$ in $\mathcal{U}(\mathfrak{g})\otimes\Cl(\mathfrak{g})$, respectively. Write $D_\mathfrak{g}$ for the cubic Dirac operator of $\mathfrak{g}$, and $D_\mathfrak{k}$ for the cubic Dirac operator of $\mathfrak{k}$, then $D_{\mathfrak{g},\mathfrak{k}}\coloneq D_\mathfrak{g}-j(D_{\mathfrak{k}})$. The relative Dirac operator squares to
    \begin{equation*}
        D_{\mathfrak{g},\mathfrak{k}}^2=\Omega_\mathfrak{g}-j(\Omega_\mathfrak{k})+\tfrac{1}{24}\Tr_\mathfrak{g}\Omega_\mathfrak{g}-\tfrac{1}{24}\Tr_\mathfrak{k}\Omega_\mathfrak{k},
    \end{equation*}
    with traces computed in the adjoint representation \cite[Theorem 2.13]{kostant_1999} \cite[Theorem 7.4]{meinrenken_2013}.
\end{remark}

We represent the Clifford algebra $\Cl(\mathfrak{su}(1,1))$ on $\mathbb{C}^2$ using the Pauli matrices, as
\begin{equation}\label{eq_clifford_rep}
    c(e_1)=\sqrt{2}\sigma_1,\qquad c(e_2)=\sqrt{2}\sigma_2,\qquad c(e_3)=\sqrt{2}i\sigma_3,
\end{equation}
where
\begin{equation}\label{eq_pauli_matrices}
    \sigma_1=\begin{pmatrix} 0 & 1 \\ 1 & 0 \end{pmatrix},\qquad \sigma_2=\begin{pmatrix} 0 & -i \\ i & 0 \end{pmatrix},\qquad \sigma_3=\begin{pmatrix} 1 & 0 \\ 0 & -1\end{pmatrix}.
\end{equation}
In this representation, the cubic Dirac operator \eqref{eq_cubic_dirac} takes the form
\begin{equation}\label{eq_cubic_dirac_rep}
    D_0=\frac{1}{\sqrt{2}}
    \begin{pmatrix} 
    -ie_3+\tfrac{1}{2} & e_1-ie_2 \\ 
    e_1+ie_2 & ie_3+\tfrac{1}{2} 
    \end{pmatrix}.
\end{equation}

The cubic Dirac operator acts on sections of the (canonical) spinor bundle $S=\mathrm{SU}(1,1)\times \mathbb{C}^2$, trivialized using right-translation. To understand this action, consider the left-regular representation of $\mathrm{SU}(1,1)$, which is an $\mathrm{SU}(1,1)$-action on a function $f\in C^\infty(\mathrm{SU}(1,1))$ defined by
\begin{equation*}
    (L_hf)(g)=f(h^{-1}g).
\end{equation*}
The corresponding action of the Lie algebra $\mathfrak{su}(1,1)$ is the action of a right-invariant vector field,  
\begin{equation*}
    (X\cdot f)(g)=\frac{d}{dt}\bigg|_{t=0}f\big(e^{-tX}g\big),
\end{equation*}
which extends uniquely to an action of the universal enveloping algebra $\mathcal{U}(\mathfrak{su}(1,1))$. Since sections of the spinor bundle are smooth functions $s\colon\mathrm{SU}(1,1)\rightarrow\mathbb{C}^2$, the Clifford algebra $\Cl(\mathfrak{su}(1,1))$ acts on $\Gamma(S)$ by Clifford multiplication through the representation \eqref{eq_clifford_rep}. Together, we obtain an action of $\mathcal{U}(\mathfrak{su}(1,1))\otimes\Cl(\mathfrak{su}(1,1))$ on $\Gamma(S)$.

\begin{remark}
    There is another spinor bundle on $\mathrm{SU}(1,1)$, which can be obtained from the universal covering manifold $\widetilde{\mathrm{SU}(1,1)}$. We, however, chose to only consider the canonical spinor bundle for this paper.
\end{remark}

\subsection{Geometric formulation}
Dirac operators can be constructed geometrically by lifting a metric-compatible connection on the tangent bundle to the spinor bundle. The cubic Dirac operator is also obtained in this way, as was first observed by Goette \cite{goette_1999} and Agricola \cite{agricola_2003}. One can find the construction in \cite[Chapter 9]{meinrenken_2013} for the case where the spinor bundle and tangent bundle are trivialized using left-translation. We briefly show how the result applies to our case, where the bundles are trivialized using right-translation.

There are two connections on the tangent bundle of $\mathrm{SU}(1,1)$ that are, in some sense, natural to consider. The first of these is the connection that is flat for right-invariant vector fields. This connection is called the natural connection $\nabla^\mathrm{nat}$, and for $X^R$ and $Y^R$ right-invariant vector fields, it is defined by
\begin{equation*}
    \nabla^\mathrm{nat}_{X^R}Y^R=0,
\end{equation*}
and has torsion $T(X^R,Y^R)=-[X^R,Y^R]$.
\begin{remark}
    The commutator of vector fields $[X^R,Y^R]$ is itself a right-invariant vector field. Using the BCH formula, one computes $[X^R,Y^R]=-[X,Y]^R$ with $[X,Y]$ the commutator in the Lie algebra. 
\end{remark}
The second connection is the Levi-Civita connection $\nabla^\mathrm{met}$, defined with respect to the (pseudo-Riemannian) metric
\begin{equation*}
    g(X,Y)(h)=-\tfrac{1}{2}B(dR_{h^{-1}}X(e),dR_{h^{-1}}Y(e)),
\end{equation*} 
for vector fields $X,Y\in\mathfrak{X}(\mathrm{SU}(1,1))$ and $h\in\mathrm{SU}(1,1)$. For right-invariant vector fields $X^R$ and $Y^R$, $\nabla^\mathrm{met}$ is given by \footnote{This expression for the Levi-Civita connection can be derived from the Koszul formula
\begin{align*}
    g(2\nabla_XY,Z)=&X(g(Y,Z))+Y(g(X,Z))-Z(g(X,Y))\\
    &+g([X,Y],Z)-g([X,Z],Y)-g([Y,Z],X).
\end{align*}}
\begin{equation*}
    \nabla^\mathrm{met}_{X^R}Y^R=\tfrac{1}{2}[X^R,Y^R].
\end{equation*}
These two connections define a one-parameter family of connections
\begin{equation*}
    \nabla^t=(1-2t)\nabla^\mathrm{nat}+2t\nabla^\mathrm{met}.
\end{equation*}
The cubic Dirac operator is the geometric Dirac operator for the connection $\nabla^t$ with $t=\tfrac{1}{3}$ \cite{agricola_2003}, \cite[Theorem 9.1]{meinrenken_2013}. The connections $\nabla^t$ determine a one-parameter family of Dirac operators $D_0^t$, given by
\begin{equation}\label{dirac_one_parameter}
    D_0^t=\sum_a e^a\otimes e_a +3t\otimes q(\phi).
\end{equation}

\section{Spectral decomposition}\label{sec_harmonic_analysis}
We set out to compute the spectrum of $D_0$, which requires some harmonic analysis of $\mathrm{SU}(1,1)$, in particular the Plancherel theorem for $\mathrm{SU}(1,1)$ and the matrix elements of the tempered representations. 

To this end, realize the Lie algebra $\mathfrak{su}(1,1)$ in terms of the Pauli matrices \eqref{eq_pauli_matrices} as $e_1=\tfrac{1}{2}\sigma_1$, $e_2=\tfrac{1}{2}\sigma_2$, $e_3=\tfrac{i}{2}\sigma_3$. Then any element of $g\in\mathrm{SU}(1,1)$ can be parametrized in terms of the Euler angles 
\begin{equation}\label{eq_parametrization_SU(1,1)}
    g(\varphi,t,\psi)=e^{\varphi e_3}e^{te_1}e^{\psi e_3},\qquad \varphi\in[0,2\pi),\quad \psi\in [-2\pi,2\pi),\quad t\geq 0.
\end{equation}
The Lie group $\mathrm{SU}(1,1)$ is unimodular, so it comes equipped with a left- and right-invariant Haar measure $dg$ that is unique up to scaling. In terms of the Euler angles, we fix the multiplicative constant and define the Haar measure by
\begin{equation}\label{eq_haar_measure}
    dg\coloneq\frac{1}{4\pi^2}\sinh t \ d\varphi \ dt \ d\psi.
\end{equation} 
With the Haar measure, define the space of square integrable functions $L^2(\mathrm{SU}(1,1))$ as the Hilbert space completion of the compactly supported functions $C^\infty_c(\mathrm{SU}(1,1))$ with the inner product
\begin{equation*}
    \langle f,h\rangle=\int_{\mathrm{SU}(1,1)}f(g)\overline{h(g)}\ dg.
\end{equation*}
The Lie algebra $\mathfrak{su}(1,1)$ acts on $L^2(\mathrm{SU}(1,1))$ as unbounded operators.

Let $\widehat{\mathrm{SU}(1,1)}$ denote the unitary dual of $\mathrm{SU}(1,1)$. As we will see shortly, the components of the cubic Dirac operator act nicely on the matrix elements of unitary irreducible representations adapted to a chosen elliptic subgroup of $\mathrm{SU}(1,1)$. To use this fact, we employ the Plancherel decomposition of $L^2(\mathrm{SU}(1,1))$. This expresses the square integrable functions as a direct integral of Hilbert spaces over $\widehat{\mathrm{SU}(1,1)}$, integrated with respect to the Plancherel measure (a famous result that was first obtained by Harish-Chandra \cite{harish-chandra_1952}). We set out to write this decomposition explicitly, following \cite{vilenkin_klimyk_1991}. 

The unitary irreducible representations in the support of the Plancherel measure are called the tempered representations. For $\mathrm{SU}(1,1)$, these are the discrete series representations $T_l^\pm$, $l=-\tfrac{1}{2},-1,-\tfrac{3}{2},...$ and the principal unitary series $T_\chi$, $\chi=(i\rho-\tfrac{1}{2},\epsilon)$, $\rho\in\mathbb{R}$, $\epsilon=0,\tfrac{1}{2}$. Note that we have adopted the labeling convention of \cite{vilenkin_klimyk_1991}. Denote the corresponding Hilbert spaces by $\mathcal{H}_l^\pm$ and $\mathcal{H}_\chi$, respectively. Choosing a basis $(v_n^{l,\pm})$ and $(v_n^\chi)$ for these Hilbert spaces, the matrix elements of the representations are defined by
\begin{equation}\label{eq_matrix_elements}
    \begin{aligned}
        &T_\chi(g)v_n^\chi=\sum_m t^\chi_{mn}(g)v^\chi_m,\\
        &T_l^\pm(g)v_n^{l,\pm}=\sum_m t^{l,\pm}_{mn}(g)v_m^{l,\pm}.
    \end{aligned}
\end{equation}

The specific bases $(v_n^{l,\pm})$ and $(v_n^\chi)$ we use are adapted to the elliptic subgroup $\{e^{se_3}\mid s\in\mathbb{R}\}$, and thus diagonalize the action of $e_3$. In this basis, the operators $ie_3$, $e_1-ie_2$, and $e_1+ie_2$ (appearing as components of $D_0$) act as ladder operators on the matrix elements \eqref{eq_matrix_elements} (see e.g. \cite{vilenkin_klimyk_1991}). For the principal unitary series, one finds 
\begin{equation}\label{eq_action_principal}
    \begin{aligned}
        -ie_3t^\chi_{mn}&=m't^\chi_{mn},\\
        (e_1-ie_2)t^\chi_{mn}&=-(\tau-m'+1)t^\chi_{m-1,n},\\
        (e_1+ie_2)t^\chi_{mn}&=-(\tau+m'+1)t^\chi_{m+1,n}.
    \end{aligned}
\end{equation}
Here $m,n\in\mathbb{Z}$, $m'=m+\epsilon$ and $\tau=i\rho-\tfrac{1}{2}$. Moreover, for the negative discrete series one finds
\begin{equation}\label{eq_action_negative_discrete}
    \begin{aligned}
        -ie_3t^{l,-}_{mn}&=m't^{l,-}_{mn},\\
        (e_1-ie_2)t^{l,-}_{mn}&=-\sqrt{(l-m'+1)(-l-m')}t^{l,-}_{m-1,n},\\
        (e_1+ie_2)t^{l,-}_{mn}&=\sqrt{(l-m')(-l-m'-1)}t^{l,-}_{m+1,n},
    \end{aligned}
\end{equation}
with $-\infty<m,n<l-\epsilon$, $m'=m+\epsilon$, and $\epsilon=\tfrac{1}{2}$ if $2l\in\mathbb{Z}_\mathrm{odd}$ and $\epsilon=0$ if $2l\in\mathbb{Z}_\mathrm{even}$. Finally, for the positive discrete series the action is
\begin{equation}\label{eq_action_positive_discrete}
    \begin{aligned}
        -ie_3t^{l,+}_{mn}&=m't^{l,+}_{mn}\\
        (e_1-ie_2)t^{l,+}_{mn}&=\sqrt{(l+m')(-l+m'-1)}t^{l,+}_{m-1,n}\\
        (e_1+ie_2)t^{l,+}_{mn}&=-\sqrt{(l+m'+1)(-l+m')}t^{l,+}_{m+1,n}
    \end{aligned}
\end{equation}
with $-l-\epsilon\leq m,n<\infty$, $m'=m+\epsilon$ and also $\epsilon=\tfrac{1}{2}$ if $2l\in\mathbb{Z}_\mathrm{odd}$ and $\epsilon=0$ if $2l\in\mathbb{Z}_\mathrm{even}$. To prove \eqref{eq_action_principal}-\eqref{eq_action_positive_discrete}, one realizes the representations as acting on a certain class of smooth functions on the circle (c.f. \cite[Section 6.4]{vilenkin_klimyk_1991}). The elliptic basis is then the usual basis $e^{-in\theta}$, and the matrix elements are Fourier coefficients. By expressing the matrix elements in terms of the Euler angels \eqref{eq_parametrization_SU(1,1)}, the action is computed via the infinitesimal action of the Lie algebra, which one obtains by computing $\frac{dg}{d\varphi}g^{-1}$, $\frac{dg}{dt}g^{-1}$ and $\frac{dg}{d\psi}g^{-1}$.

\subsection{The Plancherel decomposition}
Let $\pi$ be a unitary irreducible representation of $\mathrm{SU}(1,1)$. For $f\in L^1(\mathrm{SU}(1,1))$, its Fourier transform at $\pi$ is
\begin{equation}
    \hat{f}(\pi)\coloneq\int f(g)\pi(g^{-1})\ dg,
\end{equation}
with $dg$ the Haar measure \eqref{eq_haar_measure}. 

Recall that the two-sided regular representation $\tau$ on $L^2(\mathrm{SU}(1,1))$ is defined by $\tau(g_1,g_2)f(g)=f(g_2^{-1}gg_1)$, and that convolution of two integrable functions $f_1,f_2\in L^1(\mathrm{SU}(1,1))$ gives another integrable function $f_1*f_2(g)=\int f_1(h)f_2(h^{-1}g)dh$. Moreover, recall that a unitary representation $\pi\in\widehat{\mathrm{SU}(1,1)}$ determines a dual representation $\bar{\pi}$ on the dual space $\mathcal{H}_\pi'$ of $\mathcal{H}_\pi$ by $\bar{\pi}(g)=\pi(g^{-1})'$. Identifying the Hilbert space tensor product $\mathcal{H}_\pi\hat{\otimes}\mathcal{H}_{\bar{\pi}}$ with the space of Hilbert--Schmidt operators on $\mathcal{H}_\pi$, the Plancherel theorem is as follows.

\begin{theorem}[Plancherel theorem for $\mathrm{SU}(1,1)$ \cite{harish-chandra_1952}]\label{plancherel_theorem}
    The Fourier transform $f\rightarrow\hat{f}$ maps $L^1(\mathrm{SU}(1,1))\cap L^2(\mathrm{SU}(1,1))$ into $\int^\oplus\mathcal{H}_\pi\hat{\otimes}\mathcal{H}_{\bar{\pi}}d\mu(\pi)$, and extends to a unitary map from $L^2(\mathrm{SU}(1,1))$ onto $\int^\oplus\mathcal{H}_\pi\hat{\otimes}\mathcal{H}_{\bar{\pi}}d\mu(\pi)$ that intertwines the two-sided regular representation $\tau$ with $\int^\oplus \pi\hat{\otimes}\bar{\pi}d\mu(\pi)$. Here $d\mu(\pi)$ is the Plancherel measure
    \begin{align*}
        &d\mu(\pi)=d\mu(T_l^-)+d\mu(T_l^+)+d\mu(T_\chi),\\
        &d\mu(T_l^-)=d\mu(T_l^+)=(-l-\tfrac{1}{2})dl,\qquad d\mu(T_\chi)=\frac{1}{2}\sum_{\epsilon=0,\tfrac{1}{2}}\rho\tanh\pi(\rho+i\epsilon)d\rho,
    \end{align*}
    with $dl$ the counting measure on $\tfrac{1}{2}\mathbb{Z}_{<0}$ and $d\rho$ the Lebesgue measure on $\mathbb{R}$. For $\phi$ in the linear span of $\{f*g\mid f,g\in L^1\cap L^2\}$, one has the Fourier inversion formula
    \begin{equation*}
        \phi(g)=\int\Tr\big[\pi(g)\hat{\phi}(\pi)\big]d\mu(\pi).
    \end{equation*}
\end{theorem}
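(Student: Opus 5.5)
This is the classical Plancherel theorem, so the plan follows the standard route for a rank-one group: reduce to $K$-finite functions and compute explicitly. The aim is the Fourier inversion formula $\phi(e)=\int\Tr\hat\phi(\pi)\,d\mu(\pi)$ on a dense class. Unitarity and the intertwining property then follow by applying it to $\phi=f*f^\ast$, where $f^\ast(g)=\overline{f(g^{-1})}$.

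First, I would check the elementary facts. $\widehat{f_1*f_2}(\pi)=\hat f_2(\pi)\hat f_1(\pi)$ with the convention $\hat f(\pi)=\int f(g)\pi(g^{-1})dg$. We also have $\widehat{f^\ast}(\pi)=\hat f(\pi)^\ast$, and $\tau(g_1,g_2)$ is carried to $\pi(g_2)\,\cdot\,\pi(g_1)^{-1}$ acting on Hilbert--Schmidt operators, which is $\pi\hat\otimes\bar\pi$. With these, $\|f\|_2^2=(f*f^\ast)(e)$ and $\Tr[\hat f(\pi)^\ast\hat f(\pi)]=\|\hat f(\pi)\|_{HS}^2$. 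Hence inversion at the identity gives $\|f\|^2=\int\|\hat f(\pi)\|_{HS}^2d\mu$, which is the isometry. Evaluating at general $g$ follows by translating $\phi$.

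Second, I would prove inversion for $K$-finite $\phi\in C_c^\infty$, where $K=\{e^{se_3}\}$, using the Euler angles \eqref{eq_parametrization_SU(1,1)}. Expanding $\phi$ in the characters $e^{im\varphi}e^{in\psi}$ reduces the problem to functions of $t$ alone. In each $(m,n)$-type, only the matrix element $t_{mn}$ of each $\pi$ contributes to $\Tr[\pi(g)\hat\phi(\pi)]$. The matrix elements $t^\chi_{mn}(t)$ are Jacobi functions (hypergeometric functions in $\tanh^2(t/2)$), and the inversion becomes a Jacobi-function transform on $L^2(\mathbb{R}_{\ge0},\sinh t\,dt)$. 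For that transform I would invoke the Flensted-Jensen–Koornwinder inversion theorem. Its continuous part has density $|c(\rho)|^{-2}$. By the explicit Gamma-function formula for $c$ and $|\Gamma(\tfrac12+\epsilon+i\rho)|^2$ identities, this density equals $\tfrac12\rho\tanh\pi(\rho+i\epsilon)$ up to the normalisation \eqref{eq_haar_measure}. Its discrete part comes from the poles of $c(-\rho)^{-1}$, which sit at $i\rho-\tfrac12=l$ with $l<-\tfrac12$ admissible relative to $m,n$. These contribute exactly the discrete series $T_l^\pm$ with $m,n$ in the ranges given after \eqref{eq_action_negative_discrete}–\eqref{eq_action_positive_discrete}, with residue $-l-\tfrac12$.

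Third, I would extend from the $K$-finite case. Density of $K$-finite functions in $L^2$, together with the isometry, gives the unitary extension. For surjectivity, I would show that the span of the normalized $t^\pi_{mn}$ gives all of $\int^\oplus\mathcal{H}_\pi\hat\otimes\mathcal{H}_{\bar\pi}d\mu$. This uses completeness of the Jacobi transform within each $K$-type, i.e. Paley–Wiener. Finally, I would pass from $C_c^\infty$ to the span of $f*g$ with $f,g\in L^1\cap L^2$ by a polarization and continuity argument.

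The main obstacle is the second step: matching the residues and the $c$-function density exactly with the stated constants, including the factor $\tfrac12$, the $\epsilon$-dependence of $\tanh$, and the $\tfrac1{4\pi^2}$ in the Haar measure. The reliance on $\psi\in[-2\pi,2\pi)$ makes this especially error-prone. If I cannot do the residue bookkeeping cleanly, I would instead cite Harish-Chandra \cite{harish-chandra_1952} and \cite{vilenkin_klimyk_1991} for the form of the measure. I would then fix the constants by testing the inversion formula on a single explicit function, e.g. the Gaussian-type matrix coefficient of a lowest discrete series, whose $L^2$ norm can be computed directly.
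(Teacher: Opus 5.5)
The paper gives no proof of this theorem. It quotes Harish-Chandra, with the parametrization and normalizations taken from Vilenkin--Klimyk. Your route is a genuine, Koornwinder-style proof rather than a citation:
\begin{itemize}
\item reduce to bi-$K$-finite functions;
\item identify each $(m,n)$-isotypic piece with a Jacobi transform in $t$;
\item read the continuous density off the $c$-function and the discrete atoms off the poles of $c(-\lambda)^{-1}$;
\item get the isometry from inversion at $e$ applied to $f*f^\ast$;
\item get surjectivity from Paley--Wiener, and inversion for convolutions by polarization.
\end{itemize}
It works $K$-type by $K$-type in the same elliptic basis the paper later uses; the sums in Lemmas \ref{series_1}--\ref{series_3} are sums over $K$-types weighted by $\mu$. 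It also forces an actual check of the constants. The price is reliance on the Flensted-Jensen--Koornwinder inversion theorem and real Gamma-function bookkeeping.

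Your Step 1 identities are right except one slip: $\widehat{\tau(g_1,g_2)f}(\pi)=\pi(g_1)\hat f(\pi)\pi(g_2)^{-1}$, not with $g_1,g_2$ interchanged, and this is what matches $\pi\hat\otimes\bar\pi$. Also, to show that $L^1\cap L^2$ lands in the direct integral, you need approximants converging simultaneously in $L^1$ and $L^2$.

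There are two issues with the constants, which you correctly flag as the crux.

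\emph{The density depends on the $K$-type before normalization.} The Jacobi Plancherel density $|c_{\alpha,\beta}(\lambda)|^{-2}$, with $(\alpha,\beta)=(|m'-n'|,|m'+n'|)$, is not itself the group density. For example, $|c_{1,1}(\lambda)|^{-2}=\tfrac{1+\lambda^2}{64}|c_{0,0}(\lambda)|^{-2}$. What must be shown independent of $(m,n)$ is $|c_{\alpha,\beta}|^{-2}/|C_{mn}|^2$. Here $C_{mn}$ is the Gamma-factor by which $t^\pi_{mn}(e^{te_1})$ differs from $(\sinh\tfrac t2)^\alpha(\cosh\tfrac t2)^\beta$ times the normalized Jacobi function. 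The same applies to the residues, which become $-l-\tfrac12$ only after this normalization.

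\emph{The constants cannot match the Haar measure as stated.} This is the more serious point: the matching cannot close with the Haar measure \eqref{eq_haar_measure}. The lowest coefficient of $T^+_{-1}$ (weight $2$) has $|t^{-1,+}_{11}(g)|=\cosh^{-2}(t/2)$. Hence
\[
\|t^{-1,+}_{11}\|_2^2=\tfrac{1}{4\pi^2}\cdot 8\pi^2\int_0^\infty\cosh^{-4}(t/2)\sinh t\,dt=2\cdot 2=4 .
\]
So the atom at $T^+_{-1}$, which must equal the formal degree, is $\tfrac14$, not $-l-\tfrac12=\tfrac12$. The relative weights in the statement are the standard ones: discrete to continuous is $(k-1):\rho\tanh\pi\rho\,d\rho$, as in the Selberg identity term. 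Since $\mu$ scales inversely with $dg$, the stated $\mu$ is the Plancherel measure for $dg=\tfrac{1}{8\pi^2}\sinh t\,d\varphi\,dt\,d\psi$, i.e.\ $K$ of volume one. With \eqref{eq_haar_measure} every weight must be halved.

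Your proposed one-function test, done honestly, will detect exactly this factor $2$. Carry out the proof with one of the two normalizations corrected rather than trying to absorb the discrepancy. The paper's later use of the theorem only needs finiteness of $\int\Tr R_\pm^q\,d\mu$, so nothing downstream is affected.
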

The Plancherel theorem, and especially the Fourier inversion formula, allows to write the infinitesimal action of the Lie algebra as a field of (unbounded) operators acting on the Plancherel decomposition of $L^2(\mathrm{SU}(1,1))$. 

Let us finish with a remark on a class of functions on $\mathrm{SU}(1,1)$, which are needed for the proof of condition $(4)$ of Definition \ref{pseudo-riemannian_spectral_triple} that is presented later. These functions are Harish-Chandra Schwartz functions $\mathcal{C}(\mathrm{SU}(1,1))$, which form a special class of functions lying densely in $L^2(\mathrm{SU}(1,1))$ that are similar to Schwartz functions on $\mathbb{R}^n$. In particular, they have fast decay properties, contain the smooth compactly supported functions $C^\infty_c(\mathrm{SU}(1,1))$, and the Fourier transform maps $\mathcal{C}(\mathrm{SU}(1,1))$ onto $\mathcal{C}(\widehat{\mathrm{SU}(1,1)})$, see e.g. \cite{wallach_1988}. 

\section{Spectral triples}\label{sec_spectral_triples}
Recall the representation $\mathbb{C}^2$, see \eqref{eq_clifford_rep}, of the Clifford algebra $\Cl(\mathrm{SU}(1,1))$. With the standard inner product on $\mathbb{C}^2$, we obtain the Hilbert space of $L^2$-spinors $L^2(\mathrm{SU}(1,1))\otimes\mathbb{C}^2$, with inner product of homogeneous elements $\varphi\otimes v$ and $\psi\otimes w$ given by
\begin{equation*}
    \langle\varphi\otimes v,\psi\otimes w\rangle=\langle \varphi,\psi\rangle_{L^2(\mathrm{SU}(1,1))}\langle v,w\rangle_{\mathbb{C}^2},
\end{equation*}
where $\langle\cdot,\cdot\rangle_\mathbb{C}$ is the standard inner product on $\mathbb{C}^2$, chosen to be anti-linear in the second argument.

We can compute the Hilbert space adjoint of the cubic Dirac operator using this inner product. For this, recall that the Dirac operator is $D_0=\sum_{a}e^a\otimes e_a+1\otimes q(\phi)$. Then for $\varphi,\psi\in C_c^\infty(\mathrm{SU}(1,1))$, using left-invariance of the Haar measure, we find that for $e^i$, $i=1,2,3$,
\begin{equation*}
    \langle e^i\cdot\varphi,\psi\rangle=\int \frac{d}{dt}\bigg|_{t=0}\varphi\big(e^{-te^i}g\big)\overline{\psi(g)}\ dg=\int\frac{d}{dt}\bigg|_{t=0}\varphi(g)\overline{\psi\big(e^{te^i}g\big)}\ dg=\langle\varphi,-e^i\cdot\psi\rangle.
\end{equation*}
Thus the Hilbert space adjoint $(e^i)^*$ extends $-e^i$, which is densely defined. Hence, we conclude $e^i$ is closable, and its adjoint is the closure of $-e^i$. The adjoint of the Clifford algebra generators can immediately be deduced from \eqref{eq_clifford_rep}, and we find $c(e_1)^*=c(e_1)$, $c(e_2)^*=c(e_2)$, and $c(e_3)^*=-c(e_3)$. It follows that $q(\phi)$ is self-adjoint. Combining the above, we find
\begin{equation*}
    D_0^*=\sum_a (e^a)^*\otimes c(e_a)^*+1\otimes q(\phi)^*
\end{equation*}
This has densely defined domain containing $C_c^\infty(\mathrm{SU}(1,1))\otimes \mathbb{C}^2$, so $D_0$ is closable and we denote its closure by $D\coloneq \overline{D_0}$. Moreover, its adjoint $D^*$ is the closure of
\begin{equation*}
    E_0=-e^1\otimes c(e_1)-e^2\otimes c(e_2)+e^3\otimes c(e_3)+1\otimes q(\phi).
\end{equation*}
The Hilbert space $L^2(\mathrm{SU}(1,1))\otimes\mathbb{C}^2$ and the Dirac operator $D$ are used in the spectral triples for $\mathrm{SU}(1,1)$.

\subsection{Pseudo-Riemannian spectral triple}\label{sec_pseudo_riemannian}
With the Dirac operator $D$, we define two second-order operators. Namely, let $\langle D\rangle^2\coloneq\tfrac{1}{2}(DD^*+D^*D)$ and $R_D\coloneq\tfrac{i}{2}(D^2-D^{*2})$. The operator $\langle D\rangle^2$ is positive, so it makes sense to define the set of smooth vectors as $\mathcal{H}_\infty\coloneq\bigcap_{k\geq 0}\dom\langle D\rangle^k$. Then for $T\in B(\mathcal{H})$ such that $T\colon\mathcal{H}_\infty\rightarrow\mathcal{H}_\infty$, let $\delta(T)\coloneq[(1+\langle D\rangle^2)^{1/2},T]$. The set of regular order-$r$ pseudodifferential operators is $\OP^r(\langle D\rangle)\coloneq(1+\langle D\rangle^2)^{r/2}\big(\bigcap_{n\in\mathbb{N}}\dom\delta^n\big)$.
\begin{definition}[\cite{dungen_paschke_rennie_2013}]\label{pseudo-riemannian_spectral_triple}
    A pseudo-Riemannian spectral triple $(\mathcal{A},\mathcal{H},D)$ consists of a $*$-algebra $\mathcal{A}$ represented on the Hilbert space $\mathcal{H}$ as bounded operators, along with a densely defined closed operator $D\colon\dom D\subset \mathcal{H}\rightarrow\mathcal{H}$ such that
    \begin{enumerate}[noitemsep]
        \item[(1)] $\dom D^*D\cap\dom DD^*$ is dense in $\mathcal{H}$ and $\langle D\rangle^2$ is essentially self-adjoint on this domain.
        \item[(2a)] $R_D\colon\mathcal{H}_\infty\rightarrow\mathcal{H}_\infty$, and $[\langle D\rangle^2,R_D]\in\OP^2(\langle D\rangle)$.
        \item[(2b)] $aR_D(1+\langle D\rangle^2)^{-1}$ is compact for all $a\in\mathcal{A}$.
        \item[(3)] $[D,a]$ and $[D^*,a]$ extend to bounded operators on $\mathcal{H}$ for all $a\in\mathcal{A}$.
        \item[(4)] $a(1+\langle D\rangle^2)^{-1/2}$ is compact for all $a\in\mathcal{A}$.
    \end{enumerate}
\end{definition}

One of our main results is that $\mathrm{SU}(1,1)$ with the cubic Dirac operator forms a pseudo-Riemannian spectral triple.

\begin{theorem}\label{spectral_triple}
    Let $C_c^\infty(\mathrm{SU}(1,1))$ act on $L^2(\mathrm{SU}(1,1))$ via pointwise multiplication. Then the triple $(\mathcal{A},\mathcal{H},D)\coloneq(C^\infty_c(\mathrm{SU}(1,1))\otimes 1,L^2(\mathrm{SU}(1,1))\otimes\mathbb{C}^2,D)$, with $D$ the closure of the cubic Dirac operator, is a pseudo-Riemannian spectral triple in the sense of Definition \ref{pseudo-riemannian_spectral_triple}.
\end{theorem}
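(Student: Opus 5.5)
The plan is to reduce every condition of Definition \ref{pseudo-riemannian_spectral_triple} to explicit computations on the Plancherel decomposition of Theorem \ref{plancherel_theorem}. The operators $e_i$ are right-invariant vector fields, so they commute with right translations. In each fibre $\mathcal{H}_\pi\hat{\otimes}\mathcal{H}_{\bar\pi}$ they therefore act only on the first index $m$ of the matrix elements $t_{mn}$, through the ladder formulas \eqref{eq_action_principal}--\eqref{eq_action_positive_discrete}, and leave $n$ fixed. By \eqref{eq_cubic_dirac_rep}, $D_0$ maps the pair $t_{mn}\otimes(1,0)^T,\ t_{m+1,n}\otimes(0,1)^T$ into its own span. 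The same holds for $E_0$, since replacing $e_1,e_2$ by $-e_1,-e_2$ in \eqref{eq_cubic_dirac_rep} gives its form. Hence $D$ and $D^*$ decompose as direct integrals, over $\pi$ and $n$, of direct sums of explicit $2\times2$ matrices indexed by $m$; the edge cases of the discrete series contribute $1\times 1$ blocks.

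\textbf{Step 1 ($R_D=0$ and conditions (1), (2)).} The map $\theta\colon e_1\mapsto -e_1,\ e_2\mapsto -e_2,\ e_3\mapsto e_3$ is a Lie algebra automorphism preserving $B$, and $c\circ\theta$ is again a representation of $\Cl(\mathfrak{su}(1,1))$. Thus $E_0$ is the cubic Dirac operator written in the transformed basis, and by the formula for $D_0^2$ we get $E_0^2=\theta(\Omega)+\tfrac{1}{24}\Tr\Omega=\Omega+\tfrac{1}{24}\Tr\Omega=D_0^2$. I will check that this identity holds blockwise for the closures, where it reduces to an identity of $2\times 2$ matrices. That gives $D^2=D^{*2}$ as closed operators, so $R_D=0$, and conditions (2a) and (2b) hold trivially. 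The same blockwise computation shows that $\tfrac12(D_0E_0+E_0D_0)$ is diagonal in the basis $t_{mn}\otimes v$, with explicit real, nonnegative eigenvalues $\lambda_{m}^{\pm}(\pi)$; these are quadratic in $m'$ and in $\tau$ or $l$, and are essentially the eigenvalues of the $B_\theta$-Laplacian plus a bounded term. Since the fibrewise operators are self-adjoint multiplication operators, the direct integral of the diagonal operator is self-adjoint. Its restriction to vectors with finitely many nonzero $m$-components and compactly supported, bounded $\pi$-dependence is a core, and these vectors lie in $\dom D^*D\cap\dom DD^*$. This gives condition (1), and it identifies $\langle D\rangle^2$ with that diagonal operator.

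\textbf{Step 2 (condition (3)).} For $a=f\otimes1$ with $f\in C_c^\infty$, the commutator on the core is $[D_0,a]=\sum_a (e^a f)\otimes c(e_a)$, which is multiplication by a bounded smooth compactly supported matrix function. The same holds for $[E_0,a]$. These are bounded, and they extend to $D$ and $D^*$ by closedness.

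\textbf{Step 3 (condition (4); the main obstacle).} Put $T=a(1+\langle D\rangle^2)^{-1/2}$. Then $TT^*=a(1+\langle D\rangle^2)^{-1}\bar a$, so it suffices to show that $a(1+\langle D\rangle^2)^{-1}$ is compact; I will prove that it is Hilbert--Schmidt. Because $(1+\langle D\rangle^2)^{-1}$ commutes with right translations, it is right convolution by a matrix-valued kernel $k$. The operator $f\cdot(\text{convolution by }k)$ then has Hilbert--Schmidt norm $\|f\|_2\|k\|_2$, by the invariance of Haar measure. By the Plancherel theorem, $\|k\|_2^2=\int\sum_m\sum_\pm(1+\lambda_m^\pm(\pi))^{-2}\,d\mu(\pi)$. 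Only the single index $m$ appears here, because the kernel of a right-invariant operator pairs with one copy of $\mathcal{H}_\pi$. Using $\lambda\gtrsim m'^2+|\tau|^2$, the principal series contributes about $\sum_m\int\rho(1+m^2+\rho^2)^{-2}d\rho\sim\sum_m(1+m^2)^{-1}<\infty$, since $\rho\tanh\pi(\rho+i\epsilon)$ is bounded by a multiple of $|\rho|$. The discrete series contributes about $\sum_l(-l)\sum_{m}(1+l^2+m^2)^{-2}\sim\sum_l |l|\cdot|l|^{-3}<\infty$.

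The delicate parts are the lower bounds on $\lambda_m^\pm(\pi)$, which are uniform in all parameters, and the rigorous justification of the kernel/Plancherel identity for an operator that is not of trace class. If the kernel identity proves awkward to justify, I would fall back on the Harish-Chandra Schwartz space, approximating by Schwartz kernels.
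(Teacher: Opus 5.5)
Your proposal is correct. For conditions (1)--(3) it follows the paper: $\langle D\rangle^2$ acts fibrewise diagonally on the elliptic matrix elements, $R_D=0$ follows from $E_0^2=D_0^2$, and $[D,a]$ is handled by the Leibniz rule. Your core argument for (1) and your Cartan-involution reason for $E_0^2=D_0^2$ are tidier versions of what the paper does by direct computation. (Equivalently, $E_0=\sigma_3D_0\sigma_3$, and $D_0^2=(\Omega+\tfrac18)\otimes 1$ commutes with $\sigma_3$.)

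The genuine difference is condition (4).
\begin{itemize}
\item The paper proves $fR_\pm^q$ Hilbert--Schmidt only for $q>3$. It first needs $\int\Tr[R_\pm^q(\pi)]\,d\mu(\pi)<\infty$ (Lemmas \ref{series_1}--\ref{series_3}, partly resting on Siegel's lattice-sum estimate). With that it applies Fubini on Harish-Chandra Schwartz functions to get the pointwise kernel $f(g)\int\Tr[R_+^q(\pi)\pi(gh^{-1})]\,d\mu(\pi)$. It then extends by density and descends to $q=1$ by the iterated square roots of Lemma \ref{compactness_powers}.
\item You descend once via $TT^*$ and work at $q=2$ with the $L^2$ Plancherel theorem alone. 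This needs only $\int\sum_m(1+\lambda^\pm_m)^{-2}\,d\mu<\infty$, i.e.\ a decay exponent above $3/2$ rather than above $3$.
\end{itemize}
Your estimates for this are right and elementary. On the principal series $\lambda^\pm_m=\tfrac12\rho^2+(m'\pm\tfrac12)^2$, and on the discrete series the constraints on $m'$ give $1+\lambda^\pm_m\gtrsim 1+l^2+m'^2$ uniformly.

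The kernel identity you flag as delicate is routine. Take $k\in L^2$ with $\hat k=F$. For $\varphi\in C_c$ one has $\|k*\varphi\|_2\le\|k\|_2\|\varphi\|_1$. Also $\widehat{k*\varphi}=\hat\varphi\,\hat k$: approximate $k$ in $L^2$ by functions in $L^1\cap L^2$ and use $\|\hat\varphi(\pi)\|\le\|\varphi\|_1$. Hence $(1+\langle D\rangle^2)^{-1}\varphi=k*\varphi$, and no Schwartz-space fallback is needed. The paper's route buys a continuous, absolutely convergent kernel; yours buys a sharper exponent with less machinery. In fact the paper's final Hilbert--Schmidt step is this same $L^2$-Plancherel argument.

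Two small corrections.

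First, the claim that $D^2=D^{*2}$ as closed operators is false, though you never need it. Take a principal-series block and the vector $x_m=t_{mn}\otimes(1,0)^T+t_{m+1,n}\otimes(0,1)^T$, smeared over a compact $\rho$-interval. It satisfies $E_0x_m=\tfrac{i\rho}{\sqrt2}x_m$, while $\|D_0x_m\|^2\ge\tfrac12(2m'+1)^2\|x_m\|^2$. So $\sum_m c_mx_m$ with $\sum|c_m|^2<\infty=\sum m^2|c_m|^2$ lies in $\dom D^*$ but not in $\dom D$. Both squares act there as the scalar $\Omega+\tfrac18=-\tfrac12\rho^2$, so also $\dom D^{2}\neq\dom D^{*2}$. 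What the blockwise identity gives, and what (2a) and (2b) use, is $D^2x=D^{*2}x$ for $x\in\dom D^2\cap\dom D^{*2}$. This domain contains $\mathcal{H}_\infty$: $\dom\langle D\rangle\subset\dom D\cap\dom D^*$, and on each block $\Omega+\tfrac18$ is dominated by a multiple of $1+\langle D\rangle^2$.

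Second, for $\epsilon=\tfrac12$ the Plancherel density $\rho\tanh\pi(\rho+\tfrac{i}{2})=\rho\coth\pi\rho$ tends to $1/\pi$ at $\rho=0$. It is therefore bounded by $C(1+|\rho|)$, not by $C|\rho|$. This only adds a term $\lesssim\sum_m(1+m^2)^{-2}$.
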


The next section is devoted to the proof of this theorem. Let us, however, first look at some of its essential ingredients.

The two operators that are particularly important for a pseudo-Riemannian spectral triple are $\langle D\rangle^2=\tfrac{1}{2}(DD^*+D^*D)$ and $R_D=\tfrac{i}{2}(D^2-D^{*2})$. Let us compute these operators.

Observe that $D_0 E_0\subset DD^*$, and as $D_0E_0$ is defined on $\dom(D_0)$, this implies $DD^*$ is densely defined. Similarly, $D^*D$ is densely defined, with domain containing $\dom(D_0)$, and we conclude that $\dom(D^*D)\cap\dom(DD^*)$ is dense in $\mathcal{H}$. Moreover, $DD^*$ and $D^*D$ are self-adjoint and positive, so that $\langle D\rangle^2$ is symmetric and positive.

As $\langle D\rangle^2$ extends $\tfrac{1}{2}(D_0E_0+E_0D_0)$, on the domain of $D_0$ the action of $\langle D\rangle^2$ is given by
\begin{equation}
    \begin{aligned}
        &\tfrac{1}{2}(D_0E_0+E_0D_0)=\\
        &\frac{1}{2}\begin{pmatrix}
        (-ie_3+\tfrac{1}{2})^2-(e_1-ie_2)(e_1+ie_2) & 0 \\ 0 & (ie_3+\tfrac{1}{2})^2-(e_1+ie_2)(e_1-ie_2)
    \end{pmatrix}.
    \end{aligned}
\end{equation}
The action of the matrix components of $\langle D\rangle^2$ on the matrix elements of the principal unitary series and discrete series can easily be computed using the actions \eqref{eq_action_principal}-\eqref{eq_action_positive_discrete}. This gives
\begin{equation}\label{eq_action_first_component}
    \begin{aligned}
         \tfrac{1}{2}(-e_1^2-e_2^2-e_3^2-2ie_3+\tfrac{1}{4})t^\chi_{mn}&=(\tfrac{1}{2}|\tau|^2+m'(m'+1)+\tfrac{1}{8})t^\chi_{mn},\\
         \tfrac{1}{2}(-e_1^2-e_2^2-e_3^2-2ie_3+\tfrac{1}{4})t^{l,-}_{mn}&=(-\tfrac{1}{2}l(l+1)+m'(m'+1)+\tfrac{1}{8})t^{l,-}_{mn},\\
         \tfrac{1}{2}(-e_1^2-e_2^2-e_3^2-2ie_3+\tfrac{1}{4})t^{l,+}_{mn}&=(-\tfrac{1}{2}l(l+1)+m'(m'+1)+\tfrac{1}{8})t^{l,+}_{mn},
    \end{aligned}    
\end{equation}
and
\begin{equation}\label{eq_action_second_component}
    \begin{aligned}
         \tfrac{1}{2}(-e_1^2-e_2^2-e_3^2+2ie_3+\tfrac{1}{4})t^\chi_{mn}&=(\tfrac{1}{2}|\tau|^2+m'(m'-1)+\tfrac{1}{8})t^\chi_{mn},\\
         \tfrac{1}{2}(-e_1^2-e_2^2-e_3^2+2ie_3+\tfrac{1}{4})t^{l,-}_{mn}&=(-\tfrac{1}{2}l(l+1)+m'(m'-1)+\tfrac{1}{8})t^{l,-}_{mn},\\
         \tfrac{1}{2}(-e_1^2-e_2^2-e_3^2+2ie_3+\tfrac{1}{4})t^{l,+}_{mn}&=(-\tfrac{1}{2}l(l+1)+m'(m'-1)+\tfrac{1}{8})t^{l,+}_{mn}.
    \end{aligned}    
\end{equation}
Here $\tau=i\rho-\tfrac{1}{2}$, with $\rho\in\mathbb{R}$, so that $|\tau|^2=\rho^2+\tfrac{1}{4}$. Moreover, recall that $m'=m+\epsilon$, and that for the matrix elements of the principal unitary series $m,n$ can take any integer value, while for the negative discrete series $-\infty<m,n<l-\epsilon$ and for the positive discrete series $-l-\epsilon\leq m,n<\infty$. 

On the other hand, note that $R_D$ extends $\tfrac{1}{2}(D_0^2-E_0^2)$. One readily finds $E_0^2=D_0^2$, and as $D_0^2-E_0^2$ is defined on a dense domain we conclude $R_D=0$. 

\begin{remark}
    The fact that $R_D=0$ for the cubic Dirac operator is a big advantage, because it means that condition (2a) and (2b) of Definition \ref{pseudo-riemannian_spectral_triple} are automatically satisfied. It is interesting to note that for the one-parameter family of Dirac operators $D^t_0$ \eqref{dirac_one_parameter}, the operator $R_D$ does not vanish. Indeed, a quick computation gives that $\langle D^t\rangle^2$ extends
    \begin{equation}\label{one-parameter_positive_squared}
        \tfrac{1}{2}\big(D^t_0E^t_0+E^t_0D^t_0\big)=\Big(\tfrac{1}{2}(-e_1^2-e_2^2-e_3^2)+\tfrac{9}{8}t^2\Big)\otimes 1-\tfrac{1}{4}(1+3t)e_3\otimes c(e_1)c(e_2),
    \end{equation}
    while $R_{D^t}$ extends
    \begin{equation}\label{one-parameter_R}
        \tfrac{i}{2}\big((D^t_0)^2-(E^t_0)^2\big)=\tfrac{i}{4}(1-3t)\big(e_1\otimes c(e_2)c(e_3)+e_2\otimes c(e_1)c(e_3)\big).
    \end{equation}
\end{remark}

\subsection{Indefinite spectral triple}\label{sec_indefinite}
The first version of indefinite spectral triples \cite{dungen_rennie_2016} was not fully satisfactory, due to the fact that the main example of pseudo-Riemannian spin manifolds did not satisfy the assumptions. To be precise, in the first version it was required that the first-order operators $\Ree D\coloneq \tfrac{1}{2}(D+D^*)$ and $\Imm D\coloneq -\tfrac{i}{2}(D-D^*)$ almost anti-commute in a specific sense. For this to hold in the example, additional assumptions on the spacelike reflection $r$ had to be imposed. The definition was improved upon in \cite{dungen_2019}, where instead $\Ree D$ and $\Imm D$ are required to weakly anti-commute, made possible using results by Lesch and Mesland \cite{lesch_mesland_2019}. 

Let $S$ and $T$ be two closed and unbounded operators on a Hilbert space $\mathcal{H}$. Their anti-commutator $\{S,T\}=ST+TS$, is defined on the domain
\begin{equation*}
    \dom \{S,T\}=\{x\in \dom S\cap\dom T\mid Sx\in\dom T~\&~Tx\in \dom S\}.
\end{equation*}
If, moreover, $S$ and $T$ are self-adjoint, then they are called weakly anti-commuting if \cite[Definition 2.1]{lesch_mesland_2019}
\begin{enumerate}
    \item there is a constant $C>0$ such that for all $x\in\dom \{S,T\}$, we have
    \begin{equation*}
        \langle \{S,T\}x,\{S,T\}x\rangle\leq C(\langle x,x\rangle+\langle Sx,Sx\rangle+\langle Tx,Tx\rangle);
    \end{equation*}
    \item there is a $\lambda_0>0$ such that $(S+\lambda)^{-1}(\dom T)\subset\dom\{S,T\}$ for $\lambda\in i\mathbb{R}$, $|\lambda|\geq \lambda_0>0$.
\end{enumerate}
The operators $S$ and $T$ are said to weakly commute if the above requirements are satisfied with the anti-commutator replaced by the commutator.

\begin{definition}[\cite{dungen_2019}]\label{indefinite_spectral_triple}
    An indefinite spectral triple $(\mathcal{A},\mathcal{H},D)$ consists of a $*$-algebra $\mathcal{A}$ represented on the Hilbert space $\mathcal{H}$ as bounded operators, together with a densely defined closed operator $D\colon \dom D\subset\mathcal{H}\rightarrow\mathcal{H}$, such that
    \begin{enumerate}[noitemsep]
        \item $\Ree D$ and $\Imm D$ are essentially self-adjoint on $\dom D$.
        \item The pair $(\Ree D,\Imm D)$ weakly anti-commutes.
        \item $[D,a]$ and $[D^*,a]$ extend to bounded operators on $\mathcal{H}$ for all $a\in\mathcal{A}$.
        \item $a(1+DD^*+D^*D)^{-1}$ is compact for all $a\in\mathcal{A}$.
    \end{enumerate}
\end{definition}

\begin{remark}
    In fact, Definition \ref{indefinite_spectral_triple} describes an odd indefinite spectral triple. The spectral triple is said to be even if there exists an operator $\Gamma\in B(\mathcal{H})$ such that $\Gamma=\Gamma^*$, $\Gamma^2=1$, $\Gamma a=a\Gamma$ for all $a\in\mathcal{A}$ and $\Gamma D+D\Gamma=0$. Note that in \cite{dungen_rennie_2016} it is stated that an odd indefinite spectral triple should have the pair $(\Ree D,\Imm D)$ weakly \emph{commuting} instead of $(\Ree D,\Imm D)$ weakly \emph{anti-commuting} (or, in the setting of \cite{dungen_rennie_2016}, almost commuting instead of almost anti-commuting). However, we suggest that the modification of the definition as shown above is better, being compatible with the case of odd-dimensional pseudo-Riemannian manifolds. Also in the more general setting of unbounded Kasparov modules the correct definition is with weakly anti-commuting operators, as opposed to weakly commuting operators. The doubling procedure to obtain an even indefinite Kasparov module from an odd indefinite Kasparov module with the definition above is more straightforward than the procedure in \cite{dungen_rennie_2016}, and uses the operator
    \begin{equation*}
        \widetilde{D}=\begin{pmatrix}
            0 & D \\
            D & 0
        \end{pmatrix},
    \end{equation*}
    acting on the $\mathbb{Z}_2$ graded Hilbert space $\mathcal{H}\oplus\mathcal{H}$.
\end{remark}

One of our main results is that $\mathrm{SU}(1,1)$ with the cubic Dirac operators forms an indefinite spectral triple.

\begin{theorem}\label{spectral_triple_2}
    Let $C_c^\infty(\mathrm{SU}(1,1))$ act on $L^2(\mathrm{SU}(1,1))$ via pointwise multiplication. Then the triple $(\mathcal{A},\mathcal{H},D)\coloneq(C^\infty_c(\mathrm{SU}(1,1))\otimes 1,L^2(\mathrm{SU}(1,1))\otimes\mathbb{C}^2,D)$, with $D$ the closure of the cubic Dirac operator, is an indefinite spectral triple in the sense of Definition \ref{indefinite_spectral_triple}.
\end{theorem}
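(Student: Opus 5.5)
We verify the four conditions of Definition \ref{indefinite_spectral_triple} in turn, using the Plancherel decomposition of Theorem \ref{plancherel_theorem} to reduce questions about unbounded operators to explicit computations on the matrix elements $t^\chi_{mn}$, $t^{l,\pm}_{mn}$.

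\textbf{Real and imaginary parts.} From the formulas for $D_0$ and $E_0$ we get
\[
\Ree D_0=\tfrac{1}{2}(D_0+E_0)=\tfrac{1}{\sqrt2}\begin{pmatrix}-ie_3+\tfrac12&0\\0&ie_3+\tfrac12\end{pmatrix},\qquad
\Imm D_0=-\tfrac{i}{2}(D_0-E_0)=-\tfrac{i}{\sqrt2}\begin{pmatrix}0&e_1-ie_2\\ e_1+ie_2&0\end{pmatrix}.
\]
The operator $-ie_3$ is a symmetric generator of the unitary group $g\mapsto e^{-se_3}g$. Likewise $e_1,e_2$ generate unitary one-parameter groups, and $\Imm D_0$ is formally symmetric. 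In the Plancherel picture each operator acts fibrewise. The real part is diagonal on the basis $t_{mn}$, with real eigenvalues $\tfrac{1}{\sqrt2}(\pm m'+\tfrac12)$. The imaginary part acts in each fibre as a tridiagonal-in-$m$ (Jacobi-type) block operator with coefficients given by \eqref{eq_action_principal}--\eqref{eq_action_positive_discrete}. To get essential self-adjointness on $\dom D$, the plan is to show that $\ker(\Imm D^*\pm i)=0$. Alternatively, we can note that $(\Imm D_0)^2$ is, up to the diagonal $e_3$ part, $\Omega$ plus lower order terms. The second order operator $\tfrac12(-e_1^2-e_2^2-e_3^2)$ is elliptic for the Riemannian metric $B_\theta$, which is complete because it is left invariant. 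Chernoff's theorem for first-order operators with bounded propagation speed on a complete manifold then gives essential self-adjointness of both $\Ree D_0$ and $\Imm D_0$ on $C_c^\infty\otimes\mathbb{C}^2$, and hence on $\dom D$.

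\textbf{Weak anti-commutation.} This is where the cubic Dirac operator pays off. A direct computation shows that on $C_c^\infty\otimes\mathbb{C}^2$
\[
\{\Ree D_0,\Imm D_0\}=\tfrac12(D_0^2-E_0^2)\cdot(-i)=-R_D,
\]
up to constants, and this vanishes since $E_0^2=D_0^2$. Structurally, $\Ree D_0$ is diagonal and $\Imm D_0$ is off-diagonal, and the identity reduces to $(-ie_3+\tfrac12)(e_1-ie_2)+(e_1-ie_2)(ie_3+\tfrac12)=[-ie_3,e_1-ie_2]+(e_1-ie_2)=0$, which we check with the ladder relations. Condition (1) of weak anti-commutation is then trivial with $C$ arbitrary, provided the anti-commutator vanishes on all of $\dom\{S,T\}$ and not just on the core. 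For this and condition (2) we work fibrewise. The resolvent $(S+\lambda)^{-1}$ of $S=\overline{\Ree D_0}$ is diagonal on $t_{mn}$ with entries $(\tfrac{1}{\sqrt2}(\pm m'+\tfrac12)+\lambda)^{-1}$. For $x\in\dom T$, the vector $(S+\lambda)^{-1}x$ therefore lies in $\dom S$. Using the intertwining $T(S+\lambda)=(-S+\lambda)T$ on finite combinations of $t_{mn}$, we find $T(S+\lambda)^{-1}x=(-S+\lambda)^{-1}Tx\in\dom S$. Justifying this identity beyond the core, by approximation in the graph norm of $T$ within each Plancherel fibre, is the step I expect to be the main technical obstacle. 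Once it is in place, $\{S,T\}=0$ on its full domain follows by the same approximation.

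\textbf{Bounded commutators and compactness.} For $a=f\otimes1$ with $f\in C_c^\infty$, the commutators $[D,a]$ and $[D^*,a]$ equal $\sum_a (e^a f)\otimes \pm c(e_a)$, which is multiplication by compactly supported smooth functions and hence bounded. For condition (4), \eqref{eq_action_first_component}--\eqref{eq_action_second_component} show that $1+DD^*+D^*D$ dominates $1+\tfrac12(-e_1^2-e_2^2-e_3^2)$, which is $1$ plus the Laplacian of the Riemannian metric $B_\theta$. We then apply the standard fact (Rellich) that $f(1+\Delta)^{-1}$ is compact for $f\in C_c^\infty$ on a manifold. In the spirit of the paper, this can also be argued via Harish-Chandra Schwartz functions, as in the proof of condition (4) of Theorem \ref{spectral_triple}. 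Since $(1+\langle D\rangle^2)^{-1}$ is the square of $(1+\langle D\rangle^2)^{-1/2}$, compactness follows from that earlier result.
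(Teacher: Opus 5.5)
Your proposal is correct in substance. Its overall structure matches the paper: check the four conditions, with the cubic Dirac operator making the anticommutator vanish. For conditions (1) and (4), however, you take a genuinely different route.

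The paper stays inside the Plancherel picture throughout:
\begin{itemize}
\item essential self-adjointness of $\Ree D$ comes from the real fibrewise spectrum;
\item for $\Imm D$ it comes from Proposition \ref{squared_self_adjoint}, applied to the fibrewise-diagonal operator $(\Imm D)^2$;
\item compactness of $a(1+DD^*+D^*D)^{-1}=a(1+2\langle D\rangle^2)^{-1}$ is obtained as in Lemma \ref{compact_resolvent}, by writing $aR^q$ as a Hilbert--Schmidt integral operator using the Plancherel theorem and the lattice-sum estimates of Lemmas \ref{series_1}--\ref{series_3}.
\end{itemize}
You instead use Chernoff's theorem for a complete one-sided invariant Riemannian metric (right-invariant in the paper's conventions), and elliptic regularity plus Rellich for the weighted resolvent. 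Your route is shorter and standard, and it gives essential self-adjointness directly on $C_c^\infty(\mathrm{SU}(1,1))\otimes\mathbb{C}^2$. The paper's route deliberately avoids manifold techniques, so that it can be transported to the quantum group setting.

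Condition (3) is the same Leibniz-rule argument as Lemma \ref{bounded_commutator}. You should add, as the paper does, that $a$ preserves $\dom D$ and $\dom D^*$. For condition (2), the paper simply infers it from $\{\Ree D,\Imm D\}=0$ computed on the core and never addresses the resolvent condition. Your treatment is more careful than the paper's here.

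The step you flag as the main obstacle closes without any approximation once you notice that $\Imm D_0$ is not genuinely Jacobi-type.
\begin{itemize}
\item In each fibre, $S$ and $T$ both preserve the span of $t_{mn}\otimes e_1$ and $t_{m+1,n}\otimes e_2$ (one-dimensional at the ends of the discrete series).
\item On this span $S=\tfrac{1}{\sqrt2}(m'+\tfrac12)\sigma_3$ and $T$ is an off-diagonal Hermitian $2\times2$ matrix, so $ST=-TS$ holds there exactly.
\item The maximal block-diagonal self-adjoint operators built from these matrices extend $\Ree D_0$ and $\Imm D_0$. They therefore equal $S$ and $T$ by your Chernoff step, since a self-adjoint extension of an essentially self-adjoint operator is its closure.
\item Then $T(S+\lambda)^{-1}=(-S+\lambda)^{-1}T$ holds on $\dom T$ blockwise, with uniform bound $|\lambda|^{-1}$. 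This gives the resolvent condition for every $\lambda\in i\mathbb{R}\setminus\{0\}$.
\item For $x\in\dom\{S,T\}$, applying this identity to $y=(S+\lambda)x\in\dom T$ gives $(-S+\lambda)Tx=TSx+\lambda Tx$, i.e.\ $\{S,T\}x=0$.
\end{itemize}

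Finally, in your last sentence the relevant operator is $(1+2\langle D\rangle^2)^{-1}$, not $(1+\langle D\rangle^2)^{-1}$. They differ by the bounded factor $h(\langle D\rangle^2)$ with $h(x)=(1+x)/(1+2x)$, so the conclusion still stands.
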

The next section is devoted to the proof of this theorem, and of Theorem \ref{spectral_triple}. But we first consider the first-order operators $\Ree D$ and $\Imm D$ in a bit more detail.

Recall that the real part of $D$ is $\Ree D=\tfrac{1}{2}(D+D^*)$, and the imaginary part is $\Imm D=-\tfrac{i}{2}(D-D^*)$. These operators extend $\tfrac{1}{2}(D_0+E_0)$ and $-\tfrac{i}{2}(D_0-E_0)$, respectively. In the representation \eqref{eq_clifford_rep}, we have
\begin{equation}\label{real_and_imaginary}
    \begin{aligned}
        \tfrac{1}{2}(D_0+E_0)&=\frac{1}{\sqrt{2}}\begin{pmatrix}
            -ie_3+\tfrac{1}{2} & 0 \\
            0 & ie_3+\tfrac{1}{2}
        \end{pmatrix},\\
        -\tfrac{i}{2}(D_0-E_0)&=-\frac{i}{\sqrt{2}}\begin{pmatrix}
            0 & e_1-ie_2 \\
            e_1+ie_2 & 0
        \end{pmatrix}.
    \end{aligned}
\end{equation}
The anti-commutator $\{\Ree D,\Imm D\}$ extends the anti-commutator of the above operators. A simple computation shows that $\{\Ree D,\Imm D\}=0$, as
\begin{equation}\label{eq_anticommutator}
    \{\tfrac{1}{2}(D_0+E_0),-\tfrac{i}{2}(D_0-E_0)\}=0.
\end{equation}

\begin{remark}
    Like for the pseudo-Riemannian spectral triple, also for the indefinite spectral triple working with the cubic Dirac operator is a big advantage. Indeed, the cubic Dirac operator makes it so that the anti-commutator $\{\Ree D,\Imm D\}$ vanishes. If instead we consider the one-parameter family of Dirac operators $D_0^t$ \eqref{dirac_one_parameter}, then for the anti-commutator we find
    \begin{equation*}
        \{\tfrac{1}{2}(D_0^t+E_0^t),-\tfrac{i}{2}(D_0^t-E_0^t)\}=\begin{pmatrix}
            0 & (3t-1)(e_1-ie_2)\\
            (3t-1)(e_1+ie_2) & 0
        \end{pmatrix},
    \end{equation*}
    which is a nonvanishing first-order differential operator, except for $t=\tfrac{1}{3}$.
\end{remark}

\begin{remark}
    To emphasize once more that weakly \emph{anti-commuting} is the correct notion for odd indefinite spectral triples (Definition \ref{indefinite_spectral_triple}), note that the commutator $[\Ree D,\Imm D]$ extends the operator
    \begin{equation*}
        [\tfrac{1}{2}(D_0+E_0),-\tfrac{i}{2}(D_0-E_0)]=-\frac{i}{2}\begin{pmatrix}
            0 & \{e_1-ie_2,-ie_3\}\\
            -\{e_1+ie_2,-ie_3\} & 0
        \end{pmatrix}.
    \end{equation*}
    This is a second-order operator, so that $\Ree D$ and $\Imm D$ cannot be weakly commuting. Indeed, it is not too difficult to construct a sequence of functions that make clear that no constant $C>0$ exists for which
    \begin{equation*}
        \langle [\Ree D,\Imm D]x,[\Ree D,\Imm D]x\rangle\leq C(\langle x,x\rangle+\langle \Ree Dx,\Ree Dx\rangle+\langle \Imm Dx,\Imm Dx\rangle).
    \end{equation*}
    For instance, take a sequence $(x_{-m})$ of matrix elements of the negative discrete series with $l=1$; $x_{-m}=\begin{pmatrix}
        t^{1,-}_{mn} \\ t^{1,-}_{m+1,n}
    \end{pmatrix}$, with $-m=2,3,4,...$. Then
    \begin{align*}
        \langle [\Ree D,\Imm D]x_{-m},&[\Ree D,\Imm D]x_{-m}\rangle\\
        &=-\frac{1}{4}(-2m-1)^2(1-m)(-m-2)\langle x_{-m},x_{-m}\rangle,
    \end{align*}
    which scales with $m^4$, while
    \begin{align*}
        \langle x_{-m},x_{-m}\rangle+\langle\Ree D x_{-m},&\Ree D x_{-m}\rangle+\langle\Imm D x_{-m},\Imm D x_{-m}\rangle\\
        &=\big(1+\tfrac{1}{2}(m+\tfrac{1}{2})^2-\tfrac{1}{2}(1-m)(-m-2)\big)\langle x_{-m},x_{-m}\rangle,
    \end{align*}
    which scales (at most) with $m^2$.
\end{remark}
    
\section{Proofs}\label{sec_proof}
We now turn to the proof of Theorem \ref{spectral_triple} and Theorem \ref{spectral_triple_2}. For both, the compact resolvent condition takes the most effort to prove. We check the other requirements case by case.

\subsection{Pseudo-Riemannian spectral triple} For the pseudo-Riemannian spectral triple in Theorem \ref{spectral_triple}, recall that the operator $R_D$ vanishes, as noted in section \ref{sec_pseudo_riemannian}. Hence, it remains to show that $\langle D\rangle^2$ is essentially self-adjoint, that $[D,a]$ and $[D^*,a]$ extend to bounded operators on $\mathcal{H}$ for all $a\in\mathcal{A}$, and that the weighted resolvent $a(1+\langle D\rangle^2)^{-1/2}$ is compact for all $a\in\mathcal{A}$.

\begin{lemma}\label{essentially_self-adjoint}
    The unbounded operator $\langle D\rangle^2=\tfrac{1}{2}(DD^*+D^*D)$ is essentially self-adjoint on $\dom(D^*D)\cap\dom(DD^*)$.
\end{lemma}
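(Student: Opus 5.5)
Via the Plancherel decomposition (Theorem \ref{plancherel_theorem}), $\langle D\rangle^2$ becomes a diagonal multiplication operator. The criterion to use is: a symmetric operator $A\geq 0$ is essentially self-adjoint if $\operatorname{ran}(A+1)$ is dense. We already know that $\langle D\rangle^2$ is symmetric and positive on $\dom(D^*D)\cap\dom(DD^*)$, and that this domain contains $\dom(D_0)\supset C_c^\infty(\mathrm{SU}(1,1))\otimes\mathbb{C}^2$. So it suffices to exhibit a dense set of vectors lying in the range of $1+\langle D\rangle^2$ restricted to this domain.

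The key observation is that on $\dom(D_0)$ the operator $\langle D\rangle^2$ is diagonal in the $\mathbb{C}^2$ factor. By \eqref{eq_action_first_component}--\eqref{eq_action_second_component}, each component acts on the matrix elements $t^\chi_{mn}$, $t^{l,\pm}_{mn}$ by multiplication by an explicit real scalar $\lambda(\pi,m)\geq 0$ (for instance $\tfrac12|\tau|^2+m'(m'\pm1)+\tfrac18$). Nonnegativity follows from positivity of $\langle D\rangle^2$, or can be checked directly: for the principal series $\tfrac12(\rho^2+\tfrac14)+(m'+\tfrac12)^2-\tfrac14+\tfrac18>0$, and for the discrete series the constraint on $m'$ relative to $l$ gives the bound.

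I would use the Fourier inversion formula to transport this to the Plancherel side. For $f\in C_c^\infty$, the transform $\widehat{(Xf)}(\pi)$ equals $\pm\hat f(\pi)\,d\pi(X)$, with the sign fixed by the conventions. Hence under the unitary $\mathcal{F}$ of Theorem \ref{plancherel_theorem}, the closure of the component operators of $\langle D\rangle^2$ restricted to $C_c^\infty\otimes\mathbb{C}^2$ (or to Harish-Chandra Schwartz functions) is unitarily equivalent to multiplication by the measurable real function $\lambda(\pi,m)$. Here $m$ indexes the row (or column) of the Hilbert--Schmidt operator $\hat f(\pi)$, depending on which side the right-invariant vector fields act. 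Multiplication by a real measurable function on a direct integral is self-adjoint on its maximal domain. It is essentially self-adjoint on any core consisting of vectors with compactly supported "spectral content": finitely many $(m,n)$ and a compact range of $\rho$ and $l$.

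To conclude, take the dense subspace $\mathcal{V}$ of $L^2\otimes\mathbb{C}^2$ whose Fourier transforms are smooth, compactly supported in $\rho$, supported on finitely many $l$, and have only finitely many nonzero entries $(m,n)$. By Fourier inversion and the Harish-Chandra Schwartz space result, these correspond to Schwartz functions. They lie in $\dom(D_0)$ (or at least in the domain of the closures of $D_0$ and $E_0$, hence in $\dom(D^*D)\cap\dom(DD^*)$), and on them $\langle D\rangle^2$ acts as multiplication by $\lambda$. The space $\mathcal{V}$ is invariant under division by $1+\lambda$, since $1+\lambda\geq1$ and is smooth in $\rho$. Therefore $(1+\langle D\rangle^2)\mathcal{V}=\mathcal{V}$ is dense, and essential self-adjointness follows.

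The main obstacle is the domain bookkeeping. One must justify that these Plancherel-side vectors actually belong to $\dom(D^*D)\cap\dom(DD^*)$, and that the formal action of $D_0$ and $E_0$ computed on matrix elements agrees with the closed operators on them. I would handle this by noting that $D_0$ and $E_0$ map Harish-Chandra Schwartz functions to themselves, since they are differential operators with constant coefficients in right-invariant fields. Consequently $E_0\mathcal{V}\subset\dom D$ and $D_0\mathcal{V}\subset\dom D^*$, using that $C_c^\infty$ is dense in $\mathcal{C}(\mathrm{SU}(1,1))$ in the graph norms.
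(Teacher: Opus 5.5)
Your proposal is correct and follows the same route as the paper. Both pass to the Plancherel decomposition, where $\langle D\rangle^2$ acts diagonally with nonnegative real eigenvalues on each fiber, and conclude essential self-adjointness of the positive symmetric operator from density of the range of $1+\langle D\rangle^2$; the paper asserts this dense range directly from the fiberwise spectral bound, whereas you make the domain bookkeeping explicit via the dense subspace $\mathcal{V}\subset\mathcal{C}(\mathrm{SU}(1,1))\otimes\mathbb{C}^2\subset\dom(D^*D)\cap\dom(DD^*)$, which $1+\langle D\rangle^2$ maps onto itself.
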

\begin{proof}
    From the action of the components on the matrix elements of the tempered representations \eqref{eq_action_first_component} and \eqref{eq_action_second_component}, it follows that $\langle D\rangle^2$ acts fiberwise on the direct integral decomposition of $L^2(\mathrm{SU}(1,1))\otimes \mathbb{C}^2$ obtained from Theorem \ref{plancherel_theorem}, with diagonal action on each fiber. Positivity of $\langle D\rangle^2$ implies that on each fiber the spectrum is contained in the interval $[0,\infty)$. Hence, the spectrum of $\langle D\rangle^2+1$ on each fiber is contained in the interval $[1,\infty)$. This implies that the range of $\langle D\rangle^2+1$ is dense, and we conclude that $\langle D\rangle^2$ is essentially self-adjoint on $\dom D^*D\cap\dom DD^*$.
\end{proof}

\begin{lemma}\label{bounded_commutator}
    All $a\in\mathcal{A}$ preserve $\dom D$ and $\dom D^*$, and both $[D,a]$ and $[D^*,a]$ extend to bounded operators on $\mathcal{H}$ for all $a\in\mathcal{A}$.
\end{lemma}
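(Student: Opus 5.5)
The plan is to argue first on the core $C_c^\infty(\mathrm{SU}(1,1))\otimes\mathbb{C}^2$ via the Leibniz rule, and then pass to the closures $D=\overline{D_0}$ and $D^*=\overline{E_0}$ by a graph-norm approximation.

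\textbf{Commutator on the core.} Let $a=f\otimes 1$ with $f\in C_c^\infty(\mathrm{SU}(1,1))$ and $s\in C_c^\infty(\mathrm{SU}(1,1))\otimes\mathbb{C}^2$. Each $e^a$ acts as a right-invariant vector field, so it is a derivation: $e^a(fs)=(e^a f)s+f(e^a s)$. The Clifford factors $c(e_a)$ and $q(\phi)$ are constant matrices and commute with multiplication by $f$. Hence
\begin{equation*}
    [D_0,a]s=\sum_a (e^a f)\otimes c(e_a)\,s,\qquad [E_0,a]s=\Big(-(e^1f)\otimes c(e_1)-(e^2f)\otimes c(e_2)+(e^3f)\otimes c(e_3)\Big)s .
\end{equation*}
Each $e^af$ is again in $C_c^\infty$, so it is bounded. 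Thus both commutators are multiplication by a bounded matrix-valued function, with operator norm at most $C\sum_a\|e^af\|_\infty$.

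\textbf{Passing to the closures.} Take $x\in\dom D$ and choose $s_n\in C_c^\infty\otimes\mathbb{C}^2$ with $s_n\to x$ and $D_0s_n\to Dx$. Then $as_n\to ax$, and
\begin{equation*}
    D_0(as_n)=aD_0s_n+[D_0,a]s_n\to aDx+Bx,
\end{equation*}
where $B$ denotes the bounded multiplication operator found above. Since $D$ is closed, $ax\in\dom D$ and $Dax=aDx+Bx$. So $a$ preserves $\dom D$, and $[D,a]=B|_{\dom D}$ extends to the bounded operator $B$. The identical argument with $E_0$, whose closure is $D^*$, gives the statement for $D^*$.

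\textbf{Main obstacle.} The only delicate point is that $C_c^\infty\otimes\mathbb{C}^2$ is a core for $D^*$, and not merely for $D$. This holds because the paper established that $D^*$ is the closure of $E_0$, which is defined on $C_c^\infty\otimes\mathbb{C}^2$. If one prefers to avoid relying on that, there is an alternative route. One checks that $a^*$ maps $\dom D_0$ into itself, and then uses duality: for $y\in\dom D^*$ and $s$ in the core,
\begin{equation*}
    \langle D_0 s, ay\rangle=\langle a^*D_0s,y\rangle=\langle D_0a^*s,y\rangle-\langle[D_0,a^*]s,y\rangle .
\end{equation*}
Since $a^*s$ lies in the core and $y\in\dom D^*$, the right-hand side is bounded in $\|s\|$. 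Hence $ay\in\dom D^*$, and the commutator identity for $D^*$ follows from the same computation.
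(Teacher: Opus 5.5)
Your proof is correct and follows the paper's argument. The paper uses the Leibniz rule on $C_c^\infty(\mathrm{SU}(1,1))\otimes\mathbb{C}^2$ to produce the bounded multiplication operator $\sum_i e^i(f)\otimes c(e_i)$, passes to $\dom D$ via an approximating sequence and closedness of $D$, and handles $D^*$ by ``a similar computation''. Your extra duality argument for $D^*$ is a worthwhile addition, because it only uses $D^*=D_0^*$, whereas the paper's identification $D^*=\overline{E_0}$ is asserted but only the inclusion $\overline{E_0}\subseteq D^*$ is actually verified there.
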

\begin{proof}
    It is clear that the domain of $D_0$ and $E_0$ is preserved under the action of $\mathcal{A}$. To show that $a$ preserves $\dom D$, let $w\in\dom D$. Then there is a sequence $(v_n)\subset\dom(D_0)$ such that $v_n\rightarrow w$ and $D_0 v_n\rightarrow Dw$. As $a$ is a bounded operator, also $av_n\rightarrow aw$. In order for $aw$ to be an element of $\dom D$, we need $D_0av_n\rightarrow v$ for some $v\in \mathcal{H}$; then $v=Daw$ as $Da$ is a closed operator. We check this explicitly using the expression \eqref{eq_cubic_dirac} for the cubic Dirac operator. Let $a=f\otimes 1$ for some $f\in C_c^\infty(\mathrm{SU}(1,1)$, then using the Leibniz rule we find
    \begin{equation*}
        D_0av_n=D_0fv_n=fD_0v_n+\bigg(\sum_i e^i(f)\otimes c(e_i)\bigg)v_n.
    \end{equation*}
    The expression between brackets is a bounded operator on $\mathcal{H}$, by virtue of $f$ being smooth and compactly supported. Thus, using convergence of $D_0v_n$, we conclude that $a$ preserves $\dom D$. Moreover, from the above computation, we find for the commutator $[D,a]=\sum_i e^i(f)\otimes c(e_i)$ which extends to a bounded commutator on $\mathcal{H}$.

    A similar computation shows that $\mathcal{A}$ preserves $\dom D^*$ and that $[D^*,a]$ extends to a bounded operator on $\mathcal{H}$ for all $a\in\mathcal{A}$.
\end{proof}

We postpone the proof of the compact resolvent for now, and first consider the requirements for the indefinite spectral triple.

\subsection{Indefinite spectral triple} Recall that the anti-commutator $\{\Ree D,\Imm D\}$ vanishes, as noted in section \ref{sec_indefinite}. Moreover, Lemma \ref{bounded_commutator} applies for this case too. Hence, to prove Theorem \ref{spectral_triple_2}, it remains to show that $\Ree D$ and $\Imm D$ are essentially self-adjoint on $\dom D$ and that the resolvent $a(1+DD^*+D^*D)^{-1}$ is compact for all $a\in\mathcal{A}$. 

\begin{lemma}\label{real_essentially_self-adjoint}
    The unbounded operator $\Ree D$ is essentially self-adjoint on $\dom D$.
\end{lemma}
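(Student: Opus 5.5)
We will prove that $\Ree D$ is essentially self-adjoint on $\dom D$ by showing it is symmetric there and that $\Ree D\pm i$ have dense range. The argument mirrors the proof of Lemma \ref{essentially_self-adjoint}, using the Plancherel decomposition of Theorem \ref{plancherel_theorem}.

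First, we record symmetry. On $\dom D\subset\dom D^*$ (by \eqref{real_and_imaginary}, since $D_0$ and $E_0$ share the core $C_c^\infty(\mathrm{SU}(1,1))\otimes\mathbb{C}^2$, and $D^*=\overline{E_0}$), the operator $\Ree D=\tfrac12(D+D^*)$ is well defined. For $x,y\in\dom D$ we have
\[
\langle (D+D^*)x,y\rangle=\langle x,(D^*+D)y\rangle .
\]
Hence $\Ree D$ is symmetric on $\dom D$. To justify $\dom D\subset\dom D^*$, we would use that $D_0$ and $E_0$ differ only in the off-diagonal part $\pm(e_1\mp ie_2)$. I expect this to follow from graph-norm estimates derived from the matrix-element computations. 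Alternatively, it suffices to work on the core $C_c^\infty\otimes\mathbb{C}^2$ or on the Harish-Chandra Schwartz space $\mathcal{C}(\mathrm{SU}(1,1))\otimes\mathbb{C}^2$: essential self-adjointness on a smaller domain inside $\dom D$ implies it on $\dom D$, provided $\Ree D$ is symmetric there.

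Second, we diagonalize. By \eqref{real_and_imaginary}, $\Ree D$ extends the diagonal operator $\tfrac{1}{\sqrt2}\operatorname{diag}(-ie_3+\tfrac12,\ ie_3+\tfrac12)$. By \eqref{eq_action_principal}--\eqref{eq_action_positive_discrete}, this operator acts on the matrix elements $t_{mn}$ (tensored with the standard basis vectors of $\mathbb{C}^2$) by the real scalars $\tfrac1{\sqrt2}(m'+\tfrac12)$ and $\tfrac1{\sqrt2}(-m'+\tfrac12)$. So under the Fourier transform, $\Ree D$ becomes a fiberwise operator in $\int^\oplus\mathcal{H}_\pi\hat\otimes\mathcal{H}_{\bar\pi}\,d\mu(\pi)\otimes\mathbb{C}^2$. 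On each fiber it is diagonal in the elliptic basis with real eigenvalues, i.e. multiplication by a real function $\lambda(\pi,m,n,j)$. Equivalently, and more simply: $-ie_3$ is $i$ times the generator of the unitary one-parameter group $s\mapsto L_{e^{se_3}}$, so by Stone's theorem it is essentially self-adjoint on any invariant core such as $C_c^\infty$.

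Third, we show dense range. The finite linear combinations of vectors $\hat f$ supported on finitely many $m$ in each fiber, with $\hat f$ in $\mathcal{C}(\widehat{\mathrm{SU}(1,1)})$, are dense. On these, $\Ree D\pm i$ is inverted by multiplication by $(\lambda\pm i)^{-1}$, which is bounded by $1$. Therefore $\operatorname{ran}(\Ree D\pm i)$ contains a dense set, and $\Ree D$ is essentially self-adjoint.

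The main obstacle is the domain bookkeeping. One must check that the fiberwise-constructed preimages actually lie in the domain on which $\Ree D$ is defined: they should be Fourier transforms of Schwartz functions, hence in $\dom D_0$-closure, and hence in $\dom D$. One must also check that this domain sits inside $\dom D$. I would handle this by working with the Harish-Chandra Schwartz space, which is invariant under the Lie algebra action and under multiplication by $m$-dependent bounded symbols, and which lies in $\dom D$ because $C_c^\infty$ is dense in it in the graph norm.
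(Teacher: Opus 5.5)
Your core argument is the paper's: in the elliptic basis $\Ree D$ acts fibrewise diagonally with real eigenvalues $\tfrac{1}{\sqrt2}(\tfrac12\pm m')$, so $\Ree D\pm i$ has dense range. Your Stone-theorem shortcut is a cleaner way to the same conclusion and avoids the Plancherel theorem altogether: $-ie_3$ is a self-adjoint multiple of the generator of $s\mapsto L_{e^{se_3}}$, and $C_c^\infty$ is an invariant core.

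What fails is the domain step. The inclusion $\dom D\subset\dom D^*$ is false, and the matrix-element computations you appeal to refute it rather than prove it. Since $E_0=\sigma_3D_0\sigma_3$, with $\sigma_3$ acting on $\mathbb{C}^2$, you have $D^*=\sigma_3D\sigma_3$. By the closed graph theorem, the inclusion would force $\|D^*x\|\le C(\|x\|+\|Dx\|)$ on $\dom D$. Now take $l\le-1$ and let $u_1,u_2$ be the standard basis of $\mathbb{C}^2$. On the span of $t^{l,-}_{mn}\otimes u_1$ and $t^{l,-}_{m+1,n}\otimes u_2$, \eqref{eq_action_negative_discrete} gives
\[
\sqrt2\,D=\begin{pmatrix}\mu & -c\\ c & -\mu\end{pmatrix},\qquad
\sqrt2\,D^*=\begin{pmatrix}\mu & c\\ -c & -\mu\end{pmatrix},\qquad
\mu=m'+\tfrac12,\quad c^2=\mu^2-(l+\tfrac12)^2 .
\]
So $D$ has an eigenvector $x_m$ there, a genuine $L^2$ spinor, with eigenvalue $|l+\tfrac12|/\sqrt2$ independent of $m$. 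Moreover $x_m\approx t^{l,-}_{mn}\otimes u_1-t^{l,-}_{m+1,n}\otimes u_2$ and $\|D^*x_m\|\approx\sqrt2\,|m|\,\|x_m\|$ as $m\to-\infty$, which violates the bound. Note also that $D_0$ and $E_0$ sharing the core $C_c^\infty\otimes\mathbb{C}^2$ says nothing about nesting of the domains of their closures. Consequently $\tfrac12(D+D^*)$ is not defined on all of $\dom D$, and your reduction "provided $\Ree D$ is symmetric there" cannot be carried out as stated.

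The repair is short: work on $\dom D\cap\dom D^*$, the natural domain of $\Ree D=\tfrac12(D+D^*)$.
\begin{itemize}
\item There symmetry is automatic: $D^{**}=D$ gives $\langle (D+D^*)x,y\rangle=\langle x,(D^*+D)y\rangle$ for $x,y\in\dom D\cap\dom D^*$.
\item This operator extends $\tfrac12(D_0+E_0)$ on $C_c^\infty\otimes\mathbb{C}^2$, which your Stone (or fibrewise) argument shows is essentially self-adjoint.
\item Hence it is essentially self-adjoint, with the same closure.
\end{itemize}
The paper's proof is equally silent on this point, so its phrase ``on $\dom D$'' should be read as $\dom D\cap\dom D^*$ (or the core $\dom D_0$). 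With that reading and this fix, your argument is complete.
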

\begin{proof}
    The proof is similar to that of Lemma \ref{essentially_self-adjoint}. With the difference that, in this case, from the actions \eqref{eq_action_principal}-\eqref{eq_action_positive_discrete}, it follows that the spectrum on each fiber is real. Hence, the range of $\Ree D\pm i$ is dense, and we conclude that $\Ree D$ is essentially self-adjoint on $\dom D$.
\end{proof}

To show that $\Imm D$ is essentially self-adjoint, we use the following fact, which is stated in \cite[Problem X.28]{reed_simon_vol2}.
\begin{proposition}\label{squared_self_adjoint}
    Let $T$ be a symmetric operator and suppose the domain $\dom T^2$ of $T^2$ is dense. If $T^2$ is essentially self-adjoint on $\dom T^2$, then $T$ is essentially self-adjoint.
\end{proposition}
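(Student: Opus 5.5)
The plan is to use the criterion that a symmetric operator $T$ is essentially self-adjoint exactly when $\ker(T^*\mp i)=\{0\}$, and to combine it with the factorization $T^2+1=(T+i)(T-i)=(T-i)(T+i)$ on $\dom T^2$. The hypothesis on $T^2$ will give density of $\operatorname{ran}(T^2+1)$. That density will then force density of $\operatorname{ran}(T\pm i)$.

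First I would record the setup. Since $T$ is symmetric with dense domain $\dom T^2\subset\dom T$, its adjoint $T^*$ extends $T$. Moreover $T^2$ is symmetric on $\dom T^2$: for $x,y\in\dom T^2$ we have $\langle T^2x,y\rangle=\langle Tx,Ty\rangle=\langle x,T^2y\rangle$, and $\langle T^2x,x\rangle=\|Tx\|^2\geq 0$. So $T^2$ is a positive symmetric operator. Being essentially self-adjoint, its closure is a positive self-adjoint operator, and hence $-1$ lies in its resolvent set. Consequently $\operatorname{ran}(T^2+1)$, taken on $\dom T^2$, is dense in $\mathcal{H}$, because the range of the closure is all of $\mathcal{H}$ and the range of the operator itself is dense in it.

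Second, for $x\in\dom T^2$ we have $(T^2+1)x=(T+i)(T-i)x$, with $(T-i)x\in\dom T$. Hence $\operatorname{ran}(T^2+1)\subset\operatorname{ran}(T+i)$. Symmetrically, $(T^2+1)x=(T-i)(T+i)x$ gives $\operatorname{ran}(T^2+1)\subset\operatorname{ran}(T-i)$. Both $\operatorname{ran}(T\pm i)$ are therefore dense. Equivalently, if $y\in\ker(T^*\mp i)$, then $y$ is orthogonal to $\operatorname{ran}(T\pm i)$, which contains a dense set, so $y=0$. By the basic criterion (Reed--Simon), $T$ is essentially self-adjoint.

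The only step needing care is the first: deducing density of $\operatorname{ran}(T^2+1)$ from essential self-adjointness. I would justify it as follows. If $z\perp\operatorname{ran}(T^2+1)$, then $z\in\ker((T^2)^*+1)=\ker(\overline{T^2}+1)$, using $(T^2)^*=\overline{T^2}$ by essential self-adjointness. Positivity of $\overline{T^2}$ then gives $0=\langle(\overline{T^2}+1)z,z\rangle\geq\|z\|^2$, so $z=0$. The positivity of $\overline{T^2}$ follows from $\langle T^2x,x\rangle=\|Tx\|^2$ on the core $\dom T^2$, passed to the closure by taking limits.
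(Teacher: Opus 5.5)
Your proof is correct. The paper does not prove this proposition; it only cites Reed--Simon, Problem X.28. Your argument is the standard solution to that exercise, so it supplies a complete proof where the paper defers to the reference.

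You combine two facts. First, positivity of $T^2$ (from $\langle T^2x,x\rangle=\|Tx\|^2$) together with essential self-adjointness makes $\operatorname{ran}(T^2+1)$ dense. Second, the factorisations $T^2+1=(T+i)(T-i)=(T-i)(T+i)$ on $\dom T^2$ give $\operatorname{ran}(T^2+1)\subset\operatorname{ran}(T\pm i)$. Together these yield $\ker(T^*\mp i)=\{0\}$.

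The points that need care are all handled properly. You check that $(T\mp i)x\in\dom T$ for $x\in\dom T^2$. You identify $(T^2)^*=\overline{T^2}$. You pass positivity to the closure by a limiting argument. The sign matching between $\ker(T^*\mp i)$ and $\operatorname{ran}(T\pm i)^\perp$ is also right.

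Your route also shows something the citation hides. Essential self-adjointness of $T^2$ is used only through density of $(T^2+1)\mathcal{D}$ for some subspace $\mathcal{D}\subset\dom T^2$. In an application such as Lemma~\ref{imaginary_essentially_self_adjoint}, it therefore suffices to get that density, for instance from a fibrewise diagonalisation with nonnegative eigenvalues. There is no need to first establish essential self-adjointness of $(\Imm D)^2$ as a separate statement.
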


\begin{lemma}\label{imaginary_essentially_self_adjoint}
    The unbounded operator $\Imm D$ is essentially self-adjoint on $\dom D$.
\end{lemma}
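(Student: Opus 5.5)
The plan is to apply Proposition \ref{squared_self_adjoint} with $T$ the operator $-\tfrac{i}{2}(D_0-E_0)$ of \eqref{real_and_imaginary}, considered on $\dom(D_0)$, and then pass to $\dom D$. First, $T$ is symmetric. This follows from the adjoint computations of Section \ref{sec_spectral_triples}: the adjoint of $e^i$ extends $-e^i$, and $c(e_1),c(e_2)$ are self-adjoint. Hence the off-diagonal matrix $-\tfrac{i}{\sqrt{2}}\begin{pmatrix}0 & e_1-ie_2\\ e_1+ie_2 & 0\end{pmatrix}$ is formally self-adjoint, since $(e_1-ie_2)^\dagger=-(e_1+ie_2)$ as formal adjoints. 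Moreover, $T$ maps $C_c^\infty\otimes\mathbb{C}^2$ into itself, so $\dom T^2$ contains this dense space.

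Next, compute $T^2$:
\begin{equation*}
T^2=-\tfrac{1}{2}\begin{pmatrix}(e_1-ie_2)(e_1+ie_2) & 0\\ 0 & (e_1+ie_2)(e_1-ie_2)\end{pmatrix}.
\end{equation*}
This is diagonal, and by \eqref{eq_action_principal}--\eqref{eq_action_positive_discrete} each entry acts diagonally on the matrix elements $t_{mn}$ of the tempered representations. Equivalently, $T^2=\langle D\rangle^2-(\Ree D)^2$ on the core, because $(\Ree D)^2$ is the diagonal operator $\tfrac12(\mp ie_3+\tfrac12)^2$. For the principal series the eigenvalues are $\tfrac12|\tau\pm m'+1|^2\ge0$, and similar nonnegative values arise for the discrete series. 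These are exactly the formulas from \eqref{eq_action_first_component} minus $\tfrac12(m'+\tfrac12)^2$ (and its analogue).

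With this, we repeat the argument of Lemma \ref{essentially_self-adjoint}. Through the Plancherel decomposition (Theorem \ref{plancherel_theorem}), $T^2$ acts fiberwise and diagonally, with real nonnegative eigenvalues, so $T^2+1$ has spectrum in $[1,\infty)$ on each fiber. We then need to show that $\mathrm{Ran}(T^2+1)$ restricted to the dense domain is dense.

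This is the main obstacle: making the fiberwise argument rigorous on an explicit dense domain. Finite combinations of matrix elements are not compactly supported, so I would work with a dense space whose Fourier transforms are finitely supported in $(m,n)$ and smooth and compactly supported in $\rho$. One could use Harish-Chandra Schwartz functions, which the paper notes are dense. Each such vector $v$ is $(T^2+1)w$ with $w$ obtained by dividing fiberwise by the eigenvalues; the result is again of the same type and lies in the domain of the closure. This gives dense range, so $T^2$ is essentially self-adjoint in the sense needed for Proposition \ref{squared_self_adjoint}. That proposition then yields that $T$ is essentially self-adjoint.

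Finally, $\Imm D=-\tfrac{i}{2}(D-D^*)$ restricted to $\dom D$ is symmetric and extends $T$. Its closure is therefore a symmetric extension of the self-adjoint operator $\overline{T}$, hence equal to it. So $\Imm D$ is essentially self-adjoint on $\dom D$.
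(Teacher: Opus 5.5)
Your proposal is correct and takes essentially the same route as the paper: apply Proposition \ref{squared_self_adjoint}, using that $(\Imm D)^2$ is the diagonal operator $-\tfrac12\,\mathrm{diag}\big((e_1-ie_2)(e_1+ie_2),\,(e_1+ie_2)(e_1-ie_2)\big)$, which acts diagonally with real, indeed nonnegative, eigenvalues on the matrix elements in the Plancherel decomposition, so that adding $1$ gives dense range. The differences are only bookkeeping, and they make your version more careful about domains than the paper's sketch: you apply the proposition to the operator on the core $\dom D_0$, make the dense-range step explicit via Harish-Chandra Schwartz vectors with finitely supported Fourier transforms, and pass to $\dom D$ by maximality of self-adjoint operators.
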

\begin{proof}
    Note that $(\Imm D)^2$ extends $\big(-\tfrac{i}{2}(D_0-E_0)\big)^2$, which has dense domain $\dom D_0$. Hence, if we can show that $(\Imm D)^2$ is essentially self-adjoint on $\dom D^2$, by Proposition \ref{squared_self_adjoint}, it follows that $\Imm D$ is essentially self-adjoint on $\dom D$. From \eqref{real_and_imaginary}, it follows that $(\Imm D)^2$ extends
    \begin{equation*}
        -\frac{1}{2}\begin{pmatrix}
            (e_1-ie_2)(e_1+ie_2) & 0 \\ 0 & (e_1+ie_2)(e_1-ie_2)
        \end{pmatrix}.
    \end{equation*}
    Note that on the matrix elements of the principal unitary series,
    \begin{align*}
        (e_1-ie_2)(e_1+ie_2) t^\chi_{mn}&=-(|\tau|^2+m'(m'+1))t^\chi_{mn},\\
        (e_1+ie_2)(e_1-ie_2)t^\chi_{mn}&=-(|\tau|^2+m'(m'-1))t^\chi_{mn},
    \end{align*}
    and that similarly the action on the matrix elements of the discrete series is diagonal, with real coefficients. Following the proof of Lemma \ref{real_essentially_self-adjoint}, it follows that $(\Imm D)^2$ is essentially self-adjoint, and thus that $\Imm D$ is essentially self-adjoint.
\end{proof}

\subsection{The weighted resolvent}
We will show that the weighted resolvent for the pseudo-Riemannian spectral triple is compact, this can be used to show compactness of the weighted resolvent for the indefinite spectral triple as well. 

As $\langle D\rangle^2$ is essentially self-adjoint and positive, the operator $\langle D\rangle^2+1$ is invertible with positive bounded inverse $(\langle D\rangle^2+1)^{-1}$. Its square root $R\coloneq(\langle D\rangle^2+1)^{-1/2}$ is called the resolvent of $\langle D\rangle$. For $(\mathcal{A},\mathcal{H},D)$ to be a pseudo-Riemannian spectral triple, it is required for the weighted resolvent $aR$ to be compact for all $a\in\mathcal{A}$. To show this, we write $aR^p$ as an integral operator, and show that for $p$ big enough, this operator is Hilbert--Schmidt. Then, using the following preliminary result, we can conclude that $aR$ is compact itself:

\begin{lemma}\label{compactness_powers}
    Let $A$ be a bounded operator on $\mathcal{H}$ and let $B$ be bounded and self-adjoint. If $AB^q$ is compact for some $q\in\mathbb{R}_{>0}$, then it is compact for all $q>0$.
\end{lemma}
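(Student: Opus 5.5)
The plan is to interpret $B^q$ through the continuous functional calculus. For this I will assume, as in the intended application $B=R=(\langle D\rangle^2+1)^{-1/2}$, that $B$ is positive, so that $B^q$ is a well-defined bounded positive operator for every real $q>0$. I will then use two elementary facts:
\begin{enumerate}
    \item[(i)] compact operators form a two-sided ideal in $B(\mathcal{H})$;
    \item[(ii)] a bounded operator $T$ is compact if and only if $TT^*$ is compact. This holds because $TT^*$ compact implies $|T^*|=(TT^*)^{1/2}$ compact, and then $T^*=V|T^*|$ is compact by the polar decomposition.
\end{enumerate}
I will also use the semigroup law $B^sB^t=B^{s+t}$ for $s,t>0$.

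\emph{Going up in $q$.} Suppose $AB^s$ is compact and $r\geq s$. Then
\begin{equation*}
    AB^r=(AB^s)B^{r-s}.
\end{equation*}
Here $B^{r-s}$ is bounded (or equal to $1$ if $r=s$), so $AB^r$ is compact by (i). Hence compactness propagates to all larger exponents.

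\emph{Going down in $q$.} Suppose $AB^q$ is compact. Since $B^{q/2}$ is self-adjoint,
\begin{equation*}
    (AB^{q/2})(AB^{q/2})^*=AB^{q/2}B^{q/2}A^*=(AB^q)A^*,
\end{equation*}
and this operator is compact by (i). By (ii), $AB^{q/2}$ is compact. Iterating this halving step shows that $AB^{q/2^k}$ is compact for every $k\in\mathbb{N}$.

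\emph{Combining.} Given an arbitrary $r>0$, choose $k$ so large that $q/2^k\leq r$. The halving step gives that $AB^{q/2^k}$ is compact, and the step going up then gives that $AB^r$ is compact.

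The only point requiring care is fact (ii), specifically the direction "$TT^*$ compact implies $T$ compact". Here I will use that the square root of a positive compact operator is compact (by the spectral theorem for compact self-adjoint operators), together with the polar decomposition $T^*=V|T^*|$ with $V$ a partial isometry. Everything else is routine.
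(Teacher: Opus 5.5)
Your proof is correct and follows the paper's argument: the upward step uses the ideal property, and the downward step writes $AB^{q}A^*=CC^*$ with $C=AB^{q/2}$, takes the square root and applies the polar decomposition, iterating until the exponent $q/2^n$ falls below the target. Your added assumption that $B$ is positive is a sensible clarification. The paper tacitly needs it so that $B^q$ is defined and $B^{q/2}$ is self-adjoint for non-integer $q$, and it holds for $B=R$ in the application.
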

\begin{proof}
    Suppose there is a $q_0>0$ for which $AB^{q_0}$ is compact. If $q>q_0$, then $AB^q=AB^{q_0}B^{q-q_0}$ is compact because $B^{q-q_0}$ is a bounded operator and the compact operators form an ideal. 

    On the other hand, if $q<q_0$, note that $AB^{q_0}A^*$ is compact and of the form $CC^*$ for $C=AB^{q_0/2}$, so that, using the functional calculus for $z\mapsto\sqrt{z}$ and the polar decomposition, also $AB^{q_0/2}$ is compact. Repeating this, eventually we obtain that $AB^{q_0/2^n}$ is compact for some natural number $n$ for which $q_0/2^n<q$. Hence, also $AB^q$ is compact.
\end{proof}

Note that $R$ can be written as a diagonal matrix 
\begin{equation}
    R=\begin{pmatrix} R_+ & 0 \\ 0 & R_- \end{pmatrix}=\begin{pmatrix}R_+ & 0 \\ 0 & 0 \end{pmatrix}+\begin{pmatrix} 0 & 0 \\ 0 & R_- \end{pmatrix},
\end{equation}
with $R_\pm=\Big(-\tfrac{1}{2}(e_1^2+e_2^2+e_3^2)\mp ie_3+\tfrac{9}{8}\Big)^{-1/2}$. Write $a=f\otimes 1$; since the compact operators form an ideal, $aR$ is compact if both $fR_+$ and $fR_-$ are compact (acting on $L^2(\mathrm{SU}(1,1))$).

The operators $R_\pm$ act as a field of operators $\pi\rightarrow R_\pm(\pi)$ on the Plancherel decomposition, and on each Hilbert space in the decomposition the action is diagonal. That is, if we let $v_n^\chi$ and $v^{l,\pm}_n$ be the elliptic bases of before, then the action of $R_\pm(\pi)$ is
\begin{equation}\label{eq_action_R}
    \begin{aligned}
        R_\pm(T^\chi)&=\sum_m\Big(\tfrac{1}{2}|\tau|^2+m'(m'\pm1)+\tfrac{9}{8}\Big)^{-1/2} v^\chi_m\otimes (v^\chi_m)^*,\\
        R_\pm(T^{l,-})&=\sum_m \Big(-\tfrac{1}{2}l(l+1)+m'(m'\pm1)+\tfrac{9}{8}\Big)^{-1/2} v_m^{l,-}\otimes (v_m^{l,-})^*,\\
        R_\pm(T^{l,+})&=\sum_m \Big(-\tfrac{1}{2}l(l+1)+m'(m'\pm1)+\tfrac{9}{8}\Big)^{-1/2} v_m^{l,+}\otimes (v_m^{l,+})^*.
    \end{aligned}
\end{equation}
Here the second tensor legs contain elements of the dual basis. 

Let $\varphi\in\mathcal{C}(\mathrm{SU}(1,1))$ be a Harish-Chandra Schwartz function on $\mathrm{SU}(1,1)$. For $f\in C_c^\infty(\mathrm{SU}(1,1))$, the action of $fR_+$ on $\varphi$ is
\begin{equation}
    \begin{aligned}
        (fR_+\varphi)(g)&=\int f(g)\Tr\big[R_+(\pi)\pi(g)\hat{\varphi}(\pi)\big]d\mu(\pi)\\
        &=\int f(g)\Tr\big[R_+(\pi)\pi(g)\int\varphi(h)\pi(h^{-1})dh \big]d\mu(\pi) 
    \end{aligned}
\end{equation}
Raising $R_+$ to a power $p>0$, we aim to rewrite this to an integral operator of the form $\int K(g,h)\varphi(h)dh$, with $K\in L^2(\mathrm{SU}(1,1))\hat{\otimes} L^2(\mathrm{SU}(1,1))$, which is Hilbert--Schmidt and thus compact. For this we need some results regarding convergence of infinite sums.

\begin{proposition}[\cite{siegel_1980}]\label{sum_convergence}
    Let $Q(x,y)=ax^2+by^2$ be a quadratic form with $a,b>0$, then the series $\sum_{\mathbb{Z}^2\setminus 0}\frac{n}{Q(m,n)^{q/2}}$ is absolutely convergent for $q>3$.
\end{proposition}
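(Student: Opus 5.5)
The plan is to reduce the lattice sum to an integral and to handle the factor $n$ with a crude bound. The first observation is that $Q(m,n)=am^2+bn^2$ is comparable to the Euclidean norm squared: with $c_1=\min(a,b)>0$ and $c_2=\max(a,b)$ we have $c_1(m^2+n^2)\le Q(m,n)\le c_2(m^2+n^2)$. Since we only care about absolute convergence, we may replace each term by its absolute value, so it suffices to bound
\begin{equation*}
    \sum_{(m,n)\in\mathbb{Z}^2\setminus 0}\frac{|n|}{Q(m,n)^{q/2}}\le c_1^{-q/2}\sum_{(m,n)\neq 0}\frac{|n|}{(m^2+n^2)^{q/2}}\le c_1^{-q/2}\sum_{(m,n)\neq 0}\frac{1}{(m^2+n^2)^{(q-1)/2}},
\end{equation*}
using $|n|\le (m^2+n^2)^{1/2}$.

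The remaining sum is an Epstein-type sum $\sum_{v\in\mathbb{Z}^2\setminus 0}\|v\|^{-s}$ with $s=q-1$. I would show it converges exactly when $s>2$, that is, when $q>3$, as the statement requires. To see this, group lattice points into dyadic annuli $2^k\le\|v\|<2^{k+1}$. The $k$-th annulus contains $O(4^k)$ points, each contributing at most $2^{-ks}$. The annulus sums are therefore bounded by a constant times $2^{k(2-s)}$, a geometric series that converges for $s>2$.

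An alternative is to compare with $\int_{\|x\|\ge 1}\|x\|^{-s}\,dx=2\pi\int_1^\infty r^{1-s}\,dr$, attaching to each $v$ the unit square at $v$ and using $\|x\|\le\|v\|+\sqrt2\le(1+\sqrt2)\|v\|$ on that square. Either way there is no real obstacle. The only point needing care is counting the points per annulus, or equivalently justifying the sum–integral comparison; the bound $\#\{v:\|v\|<R\}\le(2R+1)^2$ suffices.

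Since Siegel's result is cited, the paper presumably just invokes the standard convergence of Epstein zeta functions $\sum Q(v)^{-s/2}$ for $s>2$, which is what this argument reproduces.
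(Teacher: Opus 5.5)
Your proof is correct. The paper gives no argument of its own here: it quotes the result from Siegel's lectures, with a pointer to Theorem~1.3 of Hiroe--Oda, and treats it as a known convergence fact for Epstein-type sums with the polynomial weight $n$ in the numerator.

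You instead give a self-contained elementary proof. The pointwise bounds $|n|\le\|v\|$ and $Q(v)\ge\min(a,b)\|v\|^2$ reduce everything to $\sum_{v\neq0}\|v\|^{-(q-1)}$. The dyadic lattice-point count, with at most $(2^{k+2}+1)^2$ points in the $k$-th annulus, then gives convergence once $q-1>2$.

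Your route buys transparency, and it also shows the exponent is sharp, because the bound $|n|\le\|v\|$ loses nothing essential. For large $k$, a fixed proportion of the $\asymp 4^k$ points in the $k$-th annulus have $|n|\ge 2^{k-1}$. So that annulus contributes $\asymp 2^{k(3-q)}$, and absolute convergence fails for $q\le 3$. Your estimate could also be applied directly to the quadrant sums $S_1,S_2$ in Lemmas~\ref{series_1} and~\ref{series_2}, which would make the passage through all of $\mathbb{Z}^2\setminus 0$ there unnecessary. The citation buys only generality (arbitrary positive definite forms, higher dimensions, other polynomial weights), and none of it is used in the paper.

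One small imprecision, in your alternative integral comparison only: the unit square attached to a point with $\|v\|$ near $1$ need not lie in $\{\|x\|\ge 1\}$. For example, the square $v+[0,1]^2$ with $v=(-1,0)$ contains the origin. You should either discard finitely many $v$ near $0$ or integrate over $\{\|x\|\ge r_0\}$ for a small $r_0>0$. The dyadic version needs no such fix.
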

A proof of this was originally obtained by Siegel \cite{siegel_1980}, see also \cite[Theorem 1.3]{hiroe_oda_2008}.

\begin{lemma}\label{series_1}
    The series
    \begin{equation*}
        S^-_\pm\coloneq\sum_{l\leq-\tfrac{1}{2}}\Big(-l-\tfrac{1}{2}\Big)\sum_m \frac{1}{\Big(-\tfrac{1}{2}l(l+1)+m'(m'\pm 1)+\tfrac{9}{8}\Big)^{q/2}}
    \end{equation*}
    with $l\in\tfrac{1}{2}\mathbb{Z}_{<0}$, $m<l-\epsilon$, $m'=m+\epsilon$ and $\epsilon=0$ when $l\in\mathbb{Z}$, while $\epsilon=\tfrac{1}{2}$ when $l\in\tfrac{1}{2}\mathbb{Z}_\mathrm{odd}$, converges for $q>3$. 
\end{lemma}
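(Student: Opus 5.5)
The plan is to reduce $S^-_\pm$ to the lattice sum of Proposition \ref{sum_convergence} by a change of variables that makes the denominator comparable to a positive definite quadratic form. First I complete the squares. Put $k\coloneq -l-\tfrac{1}{2}$, so that $k\in\tfrac{1}{2}\mathbb{Z}_{\geq 0}$ and the Plancherel weight $-l-\tfrac{1}{2}$ is exactly $k$. Then
\begin{equation*}
    -\tfrac{1}{2}l(l+1)=-\tfrac{1}{2}\big((l+\tfrac{1}{2})^2-\tfrac{1}{4}\big)=-\tfrac{1}{2}k^2+\tfrac{1}{8}.
\end{equation*}
Next put $x\coloneq m'\pm\tfrac{1}{2}$, so that $m'(m'\pm 1)=x^2-\tfrac{1}{4}$. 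With these substitutions the denominator of each summand becomes
\begin{equation*}
    \Big(x^2-\tfrac{1}{2}k^2+1\Big)^{q/2}.
\end{equation*}

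The key step is a lower bound for this quantity, and it is the only place where the summation range $m<l-\epsilon$ enters. Since $m$ and $l-\epsilon$ are integers with $m<l-\epsilon$, we have $m'=m+\epsilon\leq l-1=-k-\tfrac{3}{2}$. Hence $|x|=|m'\pm\tfrac{1}{2}|\geq k+1$, so $x^2\geq k^2$. Splitting $x^2=\tfrac{1}{4}x^2+\tfrac{3}{4}x^2$ then gives
\begin{equation*}
    x^2-\tfrac{1}{2}k^2+1\geq \tfrac{1}{4}x^2+\tfrac{1}{4}k^2+1\geq \tfrac{1}{16}\big(X^2+K^2\big),
\end{equation*}
where $X\coloneq 2x\in\mathbb{Z}$ and $K\coloneq 2k\in\mathbb{Z}_{\geq 0}$. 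In particular every summand is positive and well defined. Moreover, $(X,K)\neq(0,0)$, because $|x|\geq 1$.

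Then I compare with Proposition \ref{sum_convergence}. For fixed $l$, the map $m\mapsto x$ is injective, and $l\mapsto K$ is injective. So each summand of $S^-_\pm$ corresponds to a distinct lattice point $(X,K)\in\mathbb{Z}^2\setminus 0$, and its value is at most
\begin{equation*}
    \frac{K}{2}\cdot\frac{16^{q/2}}{(X^2+K^2)^{q/2}}.
\end{equation*}
All terms are nonnegative, so $S^-_\pm$ is bounded by a constant multiple of $\sum_{\mathbb{Z}^2\setminus 0}|K|/Q(X,K)^{q/2}$ with $Q(X,K)=X^2+K^2$. By Proposition \ref{sum_convergence}, which asserts absolute convergence, this is finite for $q>3$.

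The main obstacle is the indefinite sign: $-\tfrac{1}{2}k^2$ enters with a minus sign, so the denominator is not a positive definite form in $(m',l)$ as it stands. It is only the discrete-series constraint $|m'|\gtrsim |l|$ that restores positivity, so the bookkeeping of $\epsilon$ and of the strict inequality $m<l-\epsilon$ must be done carefully to get $|x|\geq k+1$ in both cases $\pm$.
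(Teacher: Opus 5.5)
Your proof is correct and follows essentially the same route as the paper. You reindex by lattice variables (your $K=2k=-2l-1$ is exactly the paper's $n$), use the discrete-series constraint $m<l-\epsilon$ to bound the denominator below by a positive definite quadratic form, and invoke Proposition~\ref{sum_convergence}; the only differences are bookkeeping, since the paper's shift $-k=m+\epsilon-l$ builds the constraint into the variable so that the cross term is visibly nonnegative and then symmetrizes over quadrants (also recording an unneeded lower bound), whereas you complete the square and inject directly into $\mathbb{Z}^2\setminus 0$.
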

\begin{proof}
   Replace $m$ by defining $-k=m+\epsilon-l$, and $l$ by defining $n=-2l-1$ to obtain $S_{\pm}^-$ as a double sum over the natural numbers. In this form, one immediately obtains the series inequalities 
    \begin{equation*}
        S_1\coloneq \sum_{n=1}^\infty \sum_{k=1}^\infty\frac{\tfrac{1}{2}n}{\Big(4k^2+\tfrac{25}{8}n^2\Big)^{q/2}} \leq S^-_\pm\leq \sum_{n=1}^\infty \sum_{k=1}^\infty\frac{\tfrac{1}{2}n}{\Big(k^2+\tfrac{1}{8}n^2\Big)^{q/2}}\eqcolon S_2.
    \end{equation*}
   Define the quadratic forms $Q_1(x,y)=4x^2+\tfrac{25}{8}y^2$ and $Q_2(x,y)=x^2+\tfrac{1}{8}y^2$, to write $S_i=\tfrac{1}{2}\sum_{n=1}^\infty\sum_{k=1}^\infty\frac{n}{Q_i(k,n)^{q/2}}$ for $i=1,2$. Now note that we can rewrite
    \begin{equation*}
        \sum_{\mathbb{Z}^2\setminus 0}\bigg|\frac{n}{Q_i(k,n)^{q/2}}\bigg|=8S_i+2\sum_{n=1}^\infty\frac{n}{Q_i(0,n)^{q/2}}.
    \end{equation*}
   The left hand side is absolutely summable for $q>3$ by Proposition \ref{sum_convergence}. Moreover, the sum $\sum_{n=1}^\infty \frac{n}{Q_i(0,n)^{q/2}}$ is well-known and converges for $q>2$. Hence, we conclude that both series $S_i$, $i=1,2$, converge for $q>3$ and thus that the series $S^-_\pm$ converges for $q>3$.   
\end{proof}

\begin{lemma}\label{series_2}
    The series
    \begin{equation*}
        S^+_\pm\coloneq\sum_{l\leq-\tfrac{1}{2}}\Big(-l-\tfrac{1}{2}\Big)\sum_m\frac{1}{\Big(-\tfrac{1}{2}l(l+1)+m'(m'\pm 1)+\tfrac{9}{8}\Big)^{q/2}}
    \end{equation*}
    with $m\geq-l-\epsilon$ and $l,m',\epsilon$ as in Lemma \ref{series_1}, converges for $q>3$.
\end{lemma}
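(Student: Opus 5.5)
The plan is to mimic the proof of Lemma \ref{series_1}: reindex so that $S^+_\pm$ becomes a double sum over positive integers, bound it above and below by sums of the form $\sum n/Q(k,n)^{q/2}$ with $Q$ a positive definite diagonal quadratic form, and then invoke Proposition \ref{sum_convergence}. A shortcut is to observe that the substitution $m'\mapsto -m'$ exchanges the positive and negative discrete series ranges. Indeed, $m\geq -l-\epsilon$ means $m'\geq -l$, whereas in Lemma \ref{series_1} we had $m'< l$, i.e. $-m'>-l$. Moreover $m'(m'\pm1)$ becomes $(-m')(-m'\mp1)$, so $S^+_\pm$ is (up to the exact boundary term) $S^-_\mp$.

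To make this precise, I would first set $k=m'+l+1$ (so $k\geq 1$ runs over positive integers, since $m'+l\in\mathbb{Z}$ and $m'\geq -l$) and $n=-2l-1\geq 0$. The weight $-l-\tfrac12=\tfrac n2$ kills the $n=0$ term, so only $n\geq1$ contributes. Substituting $m'=k-l-1=k+\tfrac{n-1}{2}$ and $l=-\tfrac{n+1}{2}$, the denominator becomes
\begin{equation*}
-\tfrac12 l(l+1)+m'(m'\pm1)+\tfrac98=\tfrac{1}{8}(1-n^2)+\bigl(k+\tfrac{n-1}{2}\bigr)\bigl(k+\tfrac{n-1}{2}\pm1\bigr)+\tfrac98,
\end{equation*}
which is an explicit quadratic polynomial in $(k,n)$ with positive leading part $k^2+kn+\tfrac18 n^2$ (the $n^2$ coefficient is $-\tfrac18+\tfrac14$), plus lower-order terms.

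The key step is to sandwich this polynomial between $c_1(k^2+n^2)$ and $c_2(k^2+n^2)$ for all $k,n\geq1$, with constants $0<c_1\le c_2$. The upper bound is immediate. For the lower bound, note that $kn\geq0$ and that the linear terms are harmless: for the sign ``$-$'' one gets terms like $-\tfrac32 k-\tfrac34 n$, which can be absorbed using $k^2\geq k$ and $n^2\ge n$ together with the constant $\tfrac98$. Positivity for small $k,n$ is guaranteed anyway, since the expression is the eigenvalue of $\langle D\rangle^2+1\geq1$. This gives
\begin{equation*}
\tfrac12\sum_{n,k\geq1}\frac{n}{Q_1(k,n)^{q/2}}\leq S^+_\pm\leq \tfrac12\sum_{n,k\ge1}\frac{n}{Q_2(k,n)^{q/2}}
\end{equation*}
for diagonal forms $Q_1(x,y)=a_1x^2+b_1y^2$ and $Q_2(x,y)=a_2x^2+b_2y^2$. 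Then, exactly as in Lemma \ref{series_1}, one writes $\sum_{\mathbb{Z}^2\setminus0}|n|/Q_i^{q/2}$ as $4\cdot(\text{quadrant sum})$ plus the axis terms. The axis terms converge for $q>2$, and Proposition \ref{sum_convergence} then yields convergence of the quadrant sums, hence of $S^+_\pm$, for $q>3$.

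I expect the main obstacle to be the lower bound in the sign ``$-$'' case, where the $kn$ cross term and the negative linear terms must be controlled uniformly. The argument only needs some explicit $c_1>0$. If the direct estimate becomes messy, I would instead use the symmetry $m'\mapsto -m'$ to identify $S^+_\pm$ with $S^-_\mp$ term by term, and then cite Lemma \ref{series_1} directly.
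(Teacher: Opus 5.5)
Your proposal is correct and follows the paper's proof essentially verbatim: the same substitution $k=m'+l+1=m+l+\epsilon+1$, $n=-2l-1$, a sandwich of the denominator between diagonal quadratic forms, and the reduction to Proposition \ref{sum_convergence} exactly as in Lemma \ref{series_1}. One small correction to your sketch: in the ``$-$'' case the denominator is exactly $k^2+kn+\tfrac18n^2-2k-n+2=(k-1)^2+1+n(k-1)+\tfrac18n^2\geq\tfrac12k^2+\tfrac18n^2$. To get this uniform bound you need $kn\geq n$ rather than only $kn\geq0$; with $kn\geq 0$ alone the estimate fails at $k=1$, $n=4$, unless you fall back on positivity on a finite set. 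The resulting form $\tfrac12k^2+\tfrac18n^2$ is precisely the paper's $Q_2$.
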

\begin{proof}
    Replace $m$ by defining $k=m+l+\epsilon+1$, and replace $l$ by defining $n=-2l-1$. We then get two sums over the natural numbers, and obtain the series inequalities
    \begin{equation*}
            S_1\coloneq\sum_{n=1}^\infty\sum_{k=1}^\infty\frac{\tfrac{1}{2}n}{\Big(2k^2+\tfrac{9}{8}n^2\Big)^{q/2}}\leq S^+_\pm \leq \sum_{n=1}^\infty\sum_{k=1}^\infty\frac{\tfrac{1}{2}n}{\Big(\tfrac{1}{2}k^2+\tfrac{1}{8}n^2\Big)^{q/2}}\eqcolon S_2.
    \end{equation*}
    From here, similar to the proof of Lemma \ref{series_1}, it follows that the series $S^+_\pm$ converges for $q>3$.
\end{proof}

\begin{lemma}\label{series_3}
    The series
    \begin{equation*}
        S^\chi_\pm=\sum_{\epsilon=0,\tfrac{1}{2}}\int_0^\infty \sum_m\frac{1}{\Big(\tfrac{1}{2}\rho^2+m'(m'\pm1)+\tfrac{5}{4}\Big)^{q/2}}\rho\tanh\pi(\rho+i\epsilon)d\rho
    \end{equation*}
    with $m\in\mathbb{Z}$ and $m'=m+\epsilon$ converges for $q>3$.
\end{lemma}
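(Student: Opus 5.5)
We follow the pattern of Lemmas \ref{series_1} and \ref{series_2}: bound the integrand from above and below by expressions built from a positive definite quadratic form in $(\rho,m)$, then compare with a convergent sum–integral.

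\textbf{Step 1: the weight.} On $[0,\infty)$ we have $0\le\rho\tanh\pi\rho\le\rho$. For $\epsilon=\tfrac12$,
\[
\tanh\pi(\rho+\tfrac{i}{2})=\coth\pi\rho,
\]
so $\rho\coth\pi\rho\le \rho+\tfrac1\pi$. This weight is positive, continuous at $\rho=0$ with limit $1/\pi$, and bounded by $\rho+1$. Hence, up to a constant, it suffices to bound
\[
\sum_{\epsilon}\int_0^\infty\sum_m (1+\rho)\,\big(\tfrac12\rho^2+m'(m'\pm1)+\tfrac54\big)^{-q/2}\,d\rho .
\]

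\textbf{Step 2: quadratic form comparison.} Write
\[
m'(m'\pm1)=\big(m'\pm\tfrac12\big)^2-\tfrac14,
\]
so the base equals $\tfrac12\rho^2+(m'\pm\tfrac12)^2+1$. This is at least $c\,(\rho^2+m^2+1)$ for a fixed $c>0$: the shift $m'\pm\tfrac12=m+\epsilon\pm\tfrac12$ is an integer or half-integer translate of $m$, and $(m+s)^2+1\ge \tfrac12(m^2+1)-s^2$ handles the translate with $|s|\le1$, after absorbing constants. So it suffices to show that
\[
\int_0^\infty\sum_{m\in\mathbb Z}\frac{1+\rho}{(\rho^2+m^2+1)^{q/2}}\,d\rho<\infty
\]
for $q>3$.

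\textbf{Step 3: convergence.} Split into the piece with $1$ in the numerator and the piece with $\rho$. For the $\rho$ piece, compare the sum over $m$ with an integral; since $t\mapsto(A+t^2)^{-q/2}$ is decreasing in $|t|$,
\[
\sum_m (A+m^2)^{-q/2}\le A^{-q/2}+\int_{\mathbb R}(A+t^2)^{-q/2}\,dt = A^{-q/2}+C_q\,A^{(1-q)/2},
\qquad A=\rho^2+1 .
\]
The $\rho$ piece is then controlled by
\[
\int_0^\infty \rho\,(\rho^2+1)^{(1-q)/2}\,d\rho ,
\]
which converges exactly when $q-1>2$, i.e.\ $q>3$; the $A^{-q/2}$ term is even better. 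The constant piece gives $\int_0^\infty(\rho^2+1)^{(1-q)/2}d\rho$, which converges for $q>2$.

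Alternatively, one could mirror the earlier lemmas by comparing with the two-dimensional integral $\int_{\mathbb R^2}|y|\,Q(x,y)^{-q/2}$, whose tail converges in polar coordinates iff $q>3$, in analogy with Proposition \ref{sum_convergence}.

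\textbf{Main obstacle.} The only delicate point is the weight for $\epsilon=\tfrac12$, where $\tanh$ becomes $\coth$. It must be checked that $\rho\coth\pi\rho$ has no singularity at $\rho=0$ and grows only linearly; Step 1 settles both. Everything else is the routine sum-to-integral comparison, which yields the same threshold $q>3$ as in the discrete series lemmas.
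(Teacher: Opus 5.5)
Your proof is correct and essentially the paper's. Both arguments bound the $\tanh$/$\coth$ weight by a linear function of $\rho$ and reduce to an iterated sum and integral over a positive definite quadratic in $(\rho,m)$, where one integration in $\rho$ and one summation in $m$ together give exactly the threshold $q>3$. The differences are minor: you sum over $m$ first by integral comparison and use the single bound $\rho\coth\pi\rho\le\rho+1/\pi$, whereas the paper integrates in $\rho$ first by the substitution $u=\tfrac12\rho^2$ and splits the $\coth$ weight at $\rho=1$.
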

\begin{proof}
    Writing out the sum over $\epsilon$, we find $S^{\chi}_\pm=S^{\chi,0}_\pm+S^{\chi,1/2}_\pm$ with
    \begin{align*}
        S^{\chi,0}_\pm&\coloneq\int_0^\infty\sum_{m\in\mathbb{Z}}\frac{\rho\tanh\pi\rho}{\Big(\tfrac{1}{2}\rho^2+m(m+1)+\tfrac{5}{4}\Big)^{q/2}}d\rho,\\
        S^{\chi,1/2}_\pm&\coloneq\int_0^\infty\sum_{m\in\mathbb{Z}}\frac{\rho\coth\pi\rho}{\Big(\tfrac{1}{2}\rho^2+(m+1)^2+1\Big)^{q/2}}d\rho.
    \end{align*}
    Let us first consider $S^{\chi,0}_\pm$. Aiming to apply Fubini's theorem, we compute
    \begin{equation*}
        \tilde{S}^{\chi,0}_\pm\coloneq\sum_{m\in\mathbb{Z}}\int_0^\infty \frac{\rho\tanh\pi\rho}{\Big(\tfrac{1}{2}\rho^2+m(m+1)+\tfrac{5}{4}\Big)^{q/2}}d\rho.
    \end{equation*}
    Note that by approximating the hyperbolic tangent, we have the inequalities
    \begin{align*}
        \sum_{m\in\mathbb{Z}}\int_1^\infty&\frac{\tfrac{1}{2}\rho}{\Big(\tfrac{1}{2}\rho^2+m(m+1)+\tfrac{5}{4}\Big)^{q/2}}d\rho\leq \\
        &\tilde{S}^{\chi,0}_\pm\leq \sum_{m\in\mathbb{Z}}\int_0^\infty \frac{\rho}{\Big(\tfrac{1}{2}\rho^2+m(m+1)+\tfrac{5}{4}\Big)^{q/2}}d\rho
    \end{align*}
    The integrals are solved explicitly using substitution, and converge for $q>2$, which gives
    \begin{equation*}
            \frac{1}{q-2}\sum_{m\in\mathbb{Z}} \frac{1}{\Big(m(m+1)+\tfrac{7}{4}\Big)^{q/2-1}}\leq \tilde{S}^{\chi,0}_\pm\leq \frac{1}{q/2-1}\sum_{m\in\mathbb{Z}} \frac{1}{\Big(m(m+1)+\tfrac{5}{4}\Big)^{q/2-1}}.
    \end{equation*}
    Now the remaining summand on both sides is quadratic in $m$, and thus converges for $q>3$.

    For $S^{\chi,1/2}_\pm$, note that by estimating the hyporbolic cotangent, we get the inequalities
    \begin{align*}
        \sum_{m\in\mathbb{Z}}&\int_0^\infty \frac{\rho}{\Big(\tfrac{1}{2}\rho^2+(m+1)^2+1\Big)^{q/2}}d\rho\leq\sum_{m\in\mathbb{Z}}\int_0^\infty \frac{\rho\coth\pi\rho}{\Big(\tfrac{1}{2}\rho^2+(m+1)^2+1\Big)^{q/2}}d\rho.\\
        &\leq\sum_{m\in\mathbb{Z}}\int_0^1 \frac{1}{\Big(\tfrac{1}{2}\rho^2+(m+1)^2+1\Big)^{q/2}}d\rho+\sum_{m\in\mathbb{Z}}\int_1^\infty \frac{2\rho}{\Big(\tfrac{1}{2}\rho^2+(m+1)^2+1\Big)^{q/2}}d\rho
    \end{align*}
    A similar method shows that the bounds converge for $q>3$. An application of Fubini's theorem finishes the proof.
\end{proof}

With the above results in hand, we can prove the main result of this section.

\begin{lemma}\label{compact_resolvent}
    Let $a\in\mathcal{A}$ and $R=(\langle D\rangle^2+1)^{-1/2}$ the resolvent of $\langle D\rangle$. Then $aR$ is a compact operator on $\mathcal{H}$.
\end{lemma}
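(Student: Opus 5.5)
We reduce to the scalar pieces. Write $a=f\otimes 1$ with $f\in C_c^\infty(\mathrm{SU}(1,1))$. Since $R$ is diagonal, it suffices to show that $fR_+$ and $fR_-$ are compact on $L^2(\mathrm{SU}(1,1))$. By Lemma \ref{compactness_powers}, applied with $A=f$ (a bounded multiplication operator) and $B=R_\pm$ (bounded and self-adjoint), it is enough to find one $q>0$ for which $fR_\pm^q$ is compact. We take $q>3$, so that Lemmas \ref{series_1}, \ref{series_2} and \ref{series_3} apply, and show that $fR_\pm^q$ is Hilbert--Schmidt.

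First we identify the integral kernel. Take $\varphi$ in the Harish-Chandra Schwartz space, or more simply in $C_c^\infty$, which is dense. Use the Fourier inversion formula of Theorem \ref{plancherel_theorem} together with the fiberwise diagonal action \eqref{eq_action_R}. Exchanging the $h$-integral with the $\mu$-integral and the trace (justified a posteriori by the $L^2$ bound below) gives
\begin{equation*}
(fR_\pm^q\varphi)(g)=\int K(g,h)\varphi(h)\,dh,\qquad K(g,h)=f(g)\int \Tr\big[R_\pm(\pi)^q\pi(h^{-1}g)\big]\,d\mu(\pi).
\end{equation*}
Equivalently, $K(g,h)=f(g)k(h^{-1}g)$ with
\begin{equation*}
k(x)=\int\sum_m \lambda_m(\pi)^{-q/2}\,t^\pi_{mm}(x)\,d\mu(\pi),
\end{equation*}
where $\lambda_m(\pi)$ are the eigenvalues appearing in \eqref{eq_action_R}. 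Thus $fR_\pm^q$ is $f$ times right convolution by $k$.

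Next we compute the Hilbert--Schmidt norm. By left invariance of Haar measure,
\begin{equation*}
\|K\|_{L^2\otimes L^2}^2=\int|f(g)|^2\int|k(h^{-1}g)|^2\,dh\,dg=\|f\|_2^2\,\|k\|_2^2.
\end{equation*}
The function $k$ is the inverse Fourier transform of the field $\pi\mapsto R_\pm(\pi)^q$, so by Plancherel
\begin{equation*}
\|k\|_2^2=\int\|R_\pm(\pi)^q\|_{HS}^2\,d\mu(\pi)=\int\sum_m\lambda_m(\pi)^{-q}\,d\mu(\pi).
\end{equation*}
Inserting the Plancherel measure, the discrete-series contributions are exactly the series $S^-_\pm$ and $S^+_\pm$ with exponent $2q$ in place of $q$. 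The principal-series contribution is $\tfrac12 S^\chi_\pm$, up to the harmless shift of constants: $\tfrac12|\tau|^2+\tfrac98=\tfrac12\rho^2+\tfrac54$, and the integral runs over $\rho\in\mathbb{R}$, i.e.\ twice the integral over $\rho\ge 0$, by evenness. Lemmas \ref{series_1}--\ref{series_3} then give finiteness as soon as $2q>3$. Hence $K\in L^2\hat\otimes L^2$ and $fR_\pm^q$ is Hilbert--Schmidt, hence compact. Lemma \ref{compactness_powers} then yields compactness of $fR_\pm$, and therefore of $aR$.

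The main obstacle is rigor in identifying $k$. One must verify that $\pi\mapsto R_\pm(\pi)^q$ is a genuine element of $\int^\oplus\mathcal H_\pi\hat\otimes\mathcal H_{\bar\pi}\,d\mu$; this is exactly the $L^2$ finiteness above. One then defines $k$ as its preimage under the unitary Plancherel map, rather than pointwise via the inversion formula. The identity $R_\pm^q\varphi=\varphi*\check{k}$ (convolution on the appropriate side) follows from the intertwining property with the two-sided regular representation. It should be checked first on the dense set of Schwartz or $C_c^\infty$ functions and then extended by boundedness. Care is needed with the side of the convolution and with the sign conventions $\pi(g^{-1})$ in $\hat f$, but none of these affect the norm computation.
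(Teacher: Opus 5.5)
Your proof is correct and follows essentially the same route as the paper. Both arguments reduce to $fR_\pm^q$, realize it as an integral operator whose kernel is $f(g)$ times the inverse Fourier transform of $\pi\mapsto R_\pm(\pi)^q$, bound that kernel in $L^2$ via Plancherel and Lemmas \ref{series_1}--\ref{series_3}, and finish with Lemma \ref{compactness_powers}.

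The one technical difference is how the kernel formula is justified. The paper does it pointwise by Fubini, using the trace-class bound $\int\Tr R_\pm^q(\pi)\,d\mu(\pi)<\infty$; this step is what forces $q>3$. You instead define $k$ as the $L^2$ preimage under the unitary Plancherel map and use the convolution theorem, so only the Hilbert--Schmidt bound ($2q>3$) is needed. In that version your earlier remark that the interchange is ``justified a posteriori by the $L^2$ bound'' is unnecessary, and an $L^2$ bound alone would not license Fubini anyway.
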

\begin{proof}
    We show that $aR^q$ is Hilbert--Schmidt for $q>3$, which using Lemma \ref{compactness_powers} implies compactness of $aR$. Write $a=f\otimes 1$ for $f\in C_c^\infty(\mathrm{SU}(1,1))$; then $aR^q$ is Hilbert--Schmidt if both $fR_+^q$ and $fR_-^q$ are Hilbert--Schmidt operators on $L^2(\mathrm{SU}(1,1))$. We show that $fR_+^q$ is Hilbert--Schmidt for $q>3$. 

    Let $\varphi\in\mathcal{C}(\mathrm{SU}(1,1))$, then $\hat{\varphi}\in\mathcal{C}(\widehat{\mathrm{SU}(1,1)})$ is trace class, so it has a well-defined inverse Fourier transform. Using \eqref{eq_action_R}, the action of $fR_+^q$ is
    \begin{equation*}
        fR_+^q\varphi(g)=\int f(g)\Tr\bigg[R_+^q(\pi)\pi(g)\int\varphi(h)\pi(h^{-1})dh\bigg]d\mu(\pi).
    \end{equation*}
    To write $fR_+^q$ as an integral operator, we need to swap the order of integration. Note that $R_+^q$ is trace class for $q>1$, as is easily checked from \eqref{eq_action_R}, so that we can rewrite
    \begin{equation*}
        fR_+^q\varphi(g)=\int f(g) \int\Tr\big[R_+^q(\pi)\pi(gh^{-1})\big]\varphi(h)dhd\mu(\pi).
    \end{equation*}
    To swap the two integrals, we apply Fubini's theorem. As $g$ is fixed, we consider
    \begin{equation*}
        \int\int\Big|\Tr\big[R_+^q(\pi)\pi(gh^{-1})\big]\varphi(h)\Big|dhd\mu(\pi)\leq \int\Tr\big[R_+^q(\pi)\big]d\mu(\pi)\int|\varphi(h)|dh
    \end{equation*}
    Writing out the trace and the Plancherel measure gives
    \begin{equation*}
        \int\Tr\big[R_+^q(\pi)\big]d\mu(\pi)=S^+_++S^-_++S^\chi_+,
    \end{equation*}
    which converges by Lemmas \ref{series_1}, \ref{series_2}, and \ref{series_3}, as we assumed $q>3$. Hence, by Fubini's theorem, we obtain
    \begin{equation*}
        fR_+^q\varphi(g)=\int f(g)\int\Tr\big[R_+^q(\pi)\pi(gh^{-1})\big]d\mu(\pi)\varphi(h)dh\eqcolon\int K(g,h)\varphi(h)dh.
    \end{equation*}

    From Lemmas \ref{series_1}-\ref{series_3}, it follows that $R_+^q\in L^2(\widehat{\mathrm{SU}(1,1)})$. Hence, using the Plancherel theorem, $\int\Tr\big[R_+^q(\pi)\pi(gh^{-1})\big]d\mu(\pi)$ is an element of $L^2(\mathrm{SU}(1,1))$ (in the variable $h$). As $f\in L^2(\mathrm{SU}(1,1))$, we conclude that $K_+(g,h)\in L^2(\mathrm{SU}(1,1)\times \mathrm{SU}(1,1))$.
    
    For $\psi\in L^2(\mathrm{SU}(1,1))$, let $(\varphi_m)\subset\mathcal{C}(\mathrm{SU}(1,1))$ be a sequence converging to $\psi$ in the $L^2$-norm, then also $fR_+^q \varphi_m\rightarrow fR_+^q\psi$ in $L^2(\mathrm{SU}(1,1))$. There is a subsequence, also denoted $\varphi_m$, that converges pointwise almost everywhere. Using that $K_+$ is an $L^2$-kernel, we find
    \begin{equation*}
        \lim_{m\rightarrow\infty}(fR_+^q\varphi_m)(g)=\lim_{m\rightarrow\infty}\int K_+(g,h)\varphi_m(h)dh=\int K_+(g,h)\psi(h)dh,
    \end{equation*}
    which we use to write
    \begin{equation*}
        fR_+^q\psi(g)=\int K_+(g,h)\psi(h)dh.
    \end{equation*}
    Thus, $fR_+^q$ is an integral operator with kernel in $L^2(\mathrm{SU}(1,1)\times \mathrm{SU}(1,1))$, so that $fR_+^q$ is Hilbert--Schmidt. As the proof for $fR_-^q$ is similar, this concludes the proof.
\end{proof}

\begin{remark}\label{rmk_proof_abelian}
    The proof of Lemma \ref{compact_resolvent} is similar to that of (the combination of)  \cite[Theorem IX.29]{reed_simon_vol2} and \cite[Theorem XI.20]{reed_simon_vol3} 
\end{remark}

The hard work for the proofs of Theorem \ref{spectral_triple} and \ref{spectral_triple_2} has been done now, what remains is a matter of combining the results.

\begin{proof}[Proof of Theorem \ref{spectral_triple}]
    We check the conditions of Definition \ref{pseudo-riemannian_spectral_triple}. For condition (1), $\langle D\rangle^2$ is essentially self-adjoint on $\operatorname{dom} D^*D\cap\operatorname{dom}DD^*$ by Lemma \ref{essentially_self-adjoint}. Condition (2) is automatically fulfilled, as $R_D=0$. Condition (3) is proven in Lemma \ref{bounded_commutator}. Finally, condition (4) is proven in Lemma \ref{compact_resolvent}.
\end{proof}

\begin{proof}[Proof of Theorem \ref{spectral_triple_2}]
    By Lemma \ref{real_essentially_self-adjoint} and \ref{imaginary_essentially_self_adjoint}, it follows that condition (1) of Definition \ref{indefinite_spectral_triple} is satisfied. Condition (2) follows because $\{\Ree D,\Imm D\}=0$. Condition (3) follows from Lemma \ref{bounded_commutator}. Condition (4) is proven similar to Lemma \ref{compact_resolvent}, using that $a(1+DD^*+D^*D)^{-1}=a(1+2\langle D\rangle^2)^{-1}$. 
\end{proof}

\begin{remark}
    The proofs of condition (1), (3) and (4) of Definition \ref{pseudo-riemannian_spectral_triple} as written above for the cubic Dirac operator $D$ go through for the one-parameter family $D^t$ \eqref{dirac_one_parameter}, up to small adjustments for the parameter $t$. Especially the series in Lemmas \ref{series_1}-\ref{series_3} converge, when adjusted for $\langle D^t\rangle^2$ (as in \eqref{one-parameter_positive_squared}). However, to conclude that $(\mathcal{A},\mathcal{H},D^t)$ is a pseudo-Riemannian spectral triple, requires one to prove conditions (2a) and (2b) for $R_{D^t}$ \eqref{one-parameter_R}, which is nontrivial if $t\neq \tfrac{1}{3}$. We will not pursue this proof here.

    Similarly, the proofs of condition (1), (3) and (4) of Definition \ref{indefinite_spectral_triple} work for the one-parameter family $D^t$. However, we will not prove that the pair $(\Ree D^t,\Imm D^t)$ weakly anticommutes for $t\neq \tfrac{1}{3}$.
\end{remark}

\end{document}